\documentclass{amsart}[12pt]
\usepackage[]{amsmath, amsthm, amsfonts, verbatim, amssymb}
\usepackage{tikz}
\usetikzlibrary{matrix,arrows}
\usepackage{url}
\usepackage[all,cmtip]{xy}

\usepackage{xcolor}
\usepackage{hyperref}
\usepackage{graphicx}
\usepackage{pinlabel}

\makeatletter
\@namedef{subjclassname@2020}{%
  \textup{2020} Mathematics Subject Classification}
\makeatother

\newcommand\R{{\mathbb R}}

\def\bZ{{\mathbb Z}}

\def\bC{{\mathbb C}}

\newtheorem {theo}{Theorem}

\newtheorem {lemm}{Lemma}
\newtheorem {exam}{Example}
\theoremstyle{Exmple}
\newtheorem {rem}{Remark}
\newtheorem {defi}{Definition}

\newtheorem {prop}{Proposition}

\def\oz1{d{\overline z}^1}
\def\oz2{d{\overline z}^2}
\def\oz3{d{\overline z}^3}

\def\oI{\overline I}

\def\oz{\overline z}

\def\oGamma{\overline \Gamma}

\def\oIq1{\oI_1\cdots\oI_{q-1}}
\def\oIq2{\oI_1\cdots\oI_{q-2}}

\setlength\arraycolsep{2pt}

\newcommand{\CPb}{\overline{\mathbb{CP}}{}^{2}}
\newcommand{\CP}{{\mathbb{CP}}{}^{2}}

\title[Complex Ball Quotients and new symplectic $4$-manifolds]
{Complex Ball Quotients and new symplectic $4$-manifolds with nonnegative signatures\\} 

\begin{document}

\author{Anar Akhmedov}
\address{School of Mathematics, 
University of Minnesota, 
Minneapolis, MN, 55455, USA}
\email{akhmedov@math.umn.edu}

\author{S\"{u}meyra Sakall{\i}}
\address{Department of Mathematical Sciences,
University of Arkansas, 
Fayetteville, AR, 72701, USA}
\email{ssakalli@uark.edu}

\author{Sai-Kee Yeung}
\address{Department of Mathematics, 
Purdue University, 
West Lafayette, IN 47907-1395, USA}
\email{yeungs@purdue.edu}

\date{January 15, 2019}

\subjclass[2020]{Primary 57R55; Secondary 57R17, 32Q55}

\begin{abstract} We present the various constructions of new symplectic $4$-manifolds with non-negative signatures using the complex surfaces on the BMY line $c_1^2 = 9\chi_h$, the Cartwright-Steger surfaces, the quotients of Hirzebruch's certain line-arrangement surfaces, along with the exotic symplectic $4$-manifolds constructed in \cite{AP2, AS}. In particular, our constructions yield to (i) an irreducible symplectic and infinitely many non-symplectic $4$-manifolds that are homeomorphic but not diffeomorphic to $(2n-1)\CP\#(2n-1)\CPb$ for each integer $n \geq 9$, (ii) the families of simply connected irreducible nonspin symplectic $4$-manifolds that have the smallest Euler characteristics among the all known simply connected $4$-manifolds with positive signatures and with more than one smooth structure. We also construct a complex surface with positive signature from the Hirzebruch's line-arrangement surfaces, which is a ball quotient. \end{abstract}

\maketitle

\section{Introduction}

This article is a continuation of the previous work, carried out in (\cite{A1}, \cite{akhmedov}, \cite{AP1}, \cite{ABBKP}, \cite{AP2}, \cite{AP3}, \cite{AP4}, \cite{AP5}, \cite{AP6}, \cite{AHP}, \cite{AGP}, \cite{AS}, \cite{APS}), on the geography of symplectic $4$-manifolds. For some background and concise history on symplectic geography problem, we refer the reader to the introduction found in \cite{AHP}, \cite{AP4}, and \cite{AGP}. 

Our work here is greatly motivated and influenced by the recent work of Donald Cartwright, Vincent Koziarz, and third author in \cite{CKY} and the earlier work of Gopal Prasad and the third author in \cite{PY, PY1}. The main purpose of our article is to construct new minimal symplectic $4$-manifolds that are interesting with respect to the symplectic geography problem. Starting from Cartwright-Steger surfaces, and their normal covers on Bogomolov-Miyaoka-Yau line $c_1^2 = 9\chi_h$, the Hirzebruch's line-arrangement surfaces and their quotients, by forming their symplectic connected sum with the exotic symplectic $4$-manifolds constructed in \cite{AP2, AS}, or the product $4$-manifolds $\Sigma_{g} \times \Sigma_{h}$, and applying the sequence of Luttinger surgeries along the lagrangian tori, we obtain a family of new symplectic $4$-manifolds with non-negative signatures. As a consequence of our work, we produce (i) an irreducible symplectic and infinitely many non-symplectic $4$-manifolds that are homeomorphic but not diffeomorphic to $(2n-1)\CP\#(2n-1)\CPb$ for each integer $n \geq 9$, (ii) the families of simply connected irreducible nonspin symplectic $4$-manifolds that have the smallest Euler characteristics among the all known simply connected $4$-manifolds with positive signature and with more than one smooth structure. We also construct a complex surface on Bogomolov-Miyaoka-Yau line $c_1^2 = 9\chi_h$ using Hirzebruch's certain line-arrangement surface.

Before stating our main results, let us fix some notations that will be used throughout this paper. Given two $4$-manifolds, $X$\/ and $Y$, we will denote their connected sum by $X\# Y$. For a positive integer $k\geq 2$, the connected sum of $k$\/ copies of $X$\/ will be denoted by $kX$\/. Let $\CP$ denote the complex projective plane, with its standard orientation, and let $\CPb$ denote the underlying smooth $4$-manifold $\CP$ equipped with the opposite orientation. Our main results are the following theorems.

\begin{theo}\label{thm:main1} Let $M$ be $(2n-1)\CP\#(2n-1)\CPb$ for any integer $n \geq 9$. Then there exist an infinite family of irreducible symplectic and an infinite family of irreducible non-symplectic $4$-manifolds that all are homeomorphic but not diffeomorphic to $M$. \end{theo}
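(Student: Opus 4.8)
The plan is to realize $M=(2n-1)\CP\#(2n-1)\CPb$ by an explicit minimal symplectic model together with infinitely many surgical variants, and then to separate the smooth structures using Seiberg--Witten invariants. First I would record the target invariants: $M$ is simply connected and nonspin with $b_2^+(M)=b_2^-(M)=2n-1$, hence $\chi(M)=4n$, $\sigma(M)=0$, which in complex-surface bookkeeping reads $c_1^2=2\chi+3\sigma=8n$ and $\chi_h=(\chi+\sigma)/4=n$. By Freedman's classification, any simply connected closed nonspin $4$-manifold with these Betti numbers is homeomorphic to $M$ (the indefinite odd form of signature $0$ is unique); so the whole problem reduces to producing such manifolds that are smoothly distinct from $M$ and from one another.

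Second, I would build a symplectic model $X$ with $\sigma(X)=0$ and the correct $\chi$. Because the target signature vanishes, the natural building blocks are the products $\Sigma_g\times\Sigma_h$ and the ball-quotient surfaces of the introduction, whose positive signature can be absorbed by taking symplectic fiber sums against the negative-signature exotic pieces from \cite{AP2,AS}. Concretely, I would glue embedded symplectic surfaces of equal genus and opposite normal Euler number, and use the genus-versus-$\chi_h$ arithmetic to hit $\chi_h=n$, $c_1^2=8n$ exactly for each $n\ge 9$. By Usher's theorem on the minimality of symplectic sums, the resulting $X$ is minimal (neither side is a rational or ruled neighborhood of the gluing surface), which I will invoke for irreducibility later.

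Third, and this is the \emph{heart of the argument}, I would kill the fundamental group. The fiber sum and the product factors contribute generators and relations to $\pi_1(X)$; I would exhibit a collection of pairwise disjoint Lagrangian tori whose core loops generate $\pi_1(X)$ and perform Luttinger surgeries along them. Each such surgery is symplectic, leaves $\chi$ and $\sigma$ unchanged, and imposes a commutator relation of the form $[\cdot,\cdot]=(\text{meridian})$; choosing the tori and their framings carefully should trivialize every generator, yielding a simply connected symplectic $X'$. This $\pi_1$ computation---tracking the images of the product and fiber-sum generators through the full sequence of surgeries and verifying that nothing survives---is the main technical obstacle and the step most likely to force the constraint $n\ge 9$. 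The Wu formula (or an explicit class of odd square) then shows $X'$ is nonspin, so by Freedman $X'$ is homeomorphic to $M$.

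Finally, I would produce the two infinite families and distinguish them. Since $b_2^+(M)=2n-1>1$ and $M$ splits as a connected sum with factors of positive $b_2^+$, its Seiberg--Witten invariants vanish identically, whereas $X'$ is symplectic with $b_2^+>1$ and hence, by Taubes, carries a nonzero basic class; thus $X'$ is not diffeomorphic to $M$. To obtain infinitely many, I would perform Fintushel--Stern knot surgery on an essential square-zero torus in $X'$: varying the knot $K$ multiplies the Seiberg--Witten invariant by $\Delta_K(t)$, giving infinitely many pairwise non-diffeomorphic manifolds of the same homeomorphism type (knot surgery preserving both $\pi_1=1$ and the intersection form). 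Fibered $K$ (monic $\Delta_K$) keep the result symplectic, producing the irreducible symplectic family, while $K$ with non-monic $\Delta_K$ break the Taubes constraint and yield non-symplectic members with nonzero Seiberg--Witten invariants. In every case minimality is preserved (Usher, and Fintushel--Stern for knot surgery), and minimality together with a nonzero Seiberg--Witten invariant rules out any smooth connected-sum decomposition---the only admissible negative-definite summand would, by Donaldson's theorem, force a $(-1)$-sphere---so all members are irreducible, completing the proof.
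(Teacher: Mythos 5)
Your overall architecture (symplectic sum of a ball-quotient block with a negative-signature block, Freedman for the homeomorphism type, Taubes/SW plus knot surgery for the infinite families, Usher for minimality) matches the paper, and your last step is essentially the paper's. But there is a genuine gap at what you yourself flag as the heart of the argument: killing $\pi_1$. You propose to trivialize $\pi_1(X)$ by Luttinger surgeries along Lagrangian tori whose core loops generate the fundamental group. That works for the product/torus blocks (this is exactly how $X_{g,g+2}$ and $X_{g,g+1}$ are made simply connected in \cite{AP2,AS}), but it cannot work for the ball-quotient factor. The relevant block here is a degree-$4$ cover $\tilde M$ of the Cartwright--Steger surface, whose fundamental group is a cocompact lattice in $PU(2,1)$; since complex hyperbolic manifolds are negatively curved, this group is word-hyperbolic and contains no $\mathbb{Z}^2$, so no collection of Lagrangian tori carries loops generating it, and each Luttinger surgery imposes only one relation of the form $\mu\lambda'^n=1$. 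No sequence of such surgeries will kill an infinite lattice.

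The paper's actual mechanism is different and is the main new input: inside the Cartwright--Steger surface there is an immersed totally geodesic curve $D$ with $i_*\pi_1(D)$ a normal subgroup of index $4$ (verified by a Magma computation), so on the corresponding $\bZ_2\times\bZ_2$ cover $\tilde M$ a component $E$ of the preimage satisfies $i_*\pi_1(E)\to\pi_1(\tilde M)$ an \emph{isomorphism}. After one blow-up and smoothing, $\tilde M\#\CPb$ contains a genus-$5$ symplectic curve $\Sigma_5$ of square $-2$ that carries the whole fundamental group. The symplectic sum is then taken \emph{along this curve} against a simply connected block ($X_{1,3}\#\CPb$, containing a genus-$5$ surface of square $+2$), and Van Kampen kills everything because the generators of $\pi_1(\Sigma_5)$ bound in the simply connected side. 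Without this ``gluing surface carries $\pi_1$'' property your construction would leave a large quotient of the lattice alive. A secondary correction: the bound $n\ge 9$ does not come from the $\pi_1$ bookkeeping but from Euler-characteristic arithmetic --- the smallest achievable holomorphic Euler characteristic is $\chi_h=4+1+(5-1)=9$, contributed by $\tilde M\#\CPb$, the smallest second block, and the genus-$5$ gluing surface.
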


The theorem above improves one of the main results of \cite{AP3, AS} where exotic irreducible smooth structures on $(2n-1)\CP\#(2n-1)\CPb$ for $n\geq 25$ and for $n\geq 12$ were constructed, respectively. The next theorem improves the main results of \cite{AP3, AHP, AS} for the positive signature cases.  

\begin{theo}\label{thm:main2}
Let $M$ be one of the following\/ $4$-manifolds.  
\begin{itemize}

\item[(i)] $(2n-1)\CP\#(2n-2)\CPb$ for any integer $n \geq 9$.

\item[(ii)] $(2n-1)\CP\#(2n-3)\CPb$ for any integer $n \geq 10$.

\end{itemize}
Then there exist an infinite family of irreducible symplectic\/ $4$-manifolds and an infinite family of irreducible non-symplectic\/ $4$-manifolds that are homeomorphic but not diffeomorphic to\/ $M$.

\end{theo}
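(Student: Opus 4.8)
The plan is to realize each target $M$ as the outcome of standard cut-and-paste surgery on an explicit symplectic model, and then to separate its smooth structures by Seiberg--Witten theory. First I record the invariants to be matched: for case (i) one has $e(M)=4n-1$ and $\sigma(M)=1$, equivalently $c_1^2=8n+1$ and $\chi_h=n$, while for case (ii) $e(M)=4n-2$ and $\sigma(M)=2$, equivalently $c_1^2=8n+2$ and $\chi_h=n$. In both cases $M$ is simply connected, nonspin, with odd $b^{+}=2n-1>1$ and positive signature. The positive signature is to be supplied by the ball-quotient building blocks on the Bogomolov--Miyaoka--Yau line $c_1^2=9\chi_h$, which satisfy $\sigma=\chi_h$; for these the Cartwright--Steger surface and its normal covers from \cite{CKY}, or the quotients of Hirzebruch's line-arrangement surfaces, are the natural candidates.

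The construction proceeds by forming a Gompf symplectic (normal) connected sum $Z$ of a suitable such ball-quotient piece with copies of the exotic symplectic building blocks of \cite{AP2, AS} and/or products $\Sigma_g\times\Sigma_h$, glued along embedded symplectic surfaces of equal genus and opposite self-intersection. Since signature is additive under fiber sum and the Euler characteristic obeys $e(X_1\#_\Sigma X_2)=e(X_1)+e(X_2)-2e(\Sigma)$, the building blocks can be chosen so that $(e,\sigma)$ of $Z$ equals that of $M$: the ball-quotient factor (or a pair of them in case (ii)) contributes the positive part of the signature, and the remaining factors are selected so that the signatures sum to $1$ (resp. $2$) while $\chi_h$ builds up to $n$. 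I would then perform a sequence of Luttinger surgeries along pairwise disjoint Lagrangian tori sitting in the product pieces. By the theorem of Auroux--Donaldson--Katzarkov each such surgery keeps the manifold symplectic and leaves $e$ and $\sigma$ unchanged, so the resulting $Z'$ still carries the invariants of $M$; the sole purpose of the surgeries is to trivialize $\pi_1$.

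Granting $\pi_1(Z')=1$, the identification with $M$ follows formally. The manifold $Z'$ is simply connected and nonspin (its intersection form is odd, as one checks on the building blocks) with the same $(e,\sigma)$ as $M$, so Freedman's classification yields a homeomorphism $Z'\cong M$. To see that $Z'$ is not diffeomorphic to $M$, note that $M=(2n-1)\CP\#(2n-2)\CPb$ (resp. $(2n-1)\CP\#(2n-3)\CPb$) is a connected sum with $b^{+}>0$ on both summands, hence has vanishing Seiberg--Witten invariants, whereas $Z'$ is symplectic with $b^{+}>1$ and therefore has nonzero Seiberg--Witten invariant by Taubes. Moreover $Z'$ is minimal---here I invoke Usher's theorem on minimality of symplectic sums together with the fact that Luttinger surgery preserves minimality---and a minimal symplectic $4$-manifold with $b^{+}>1$ is irreducible, which produces the irreducible symplectic member. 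Finally, applying Fintushel--Stern knot surgery along a homologically essential torus in $Z'$ and letting the knot range over a family with distinct Alexander polynomials produces infinitely many pairwise non-diffeomorphic manifolds, all homeomorphic to $M$ and distinguished by their Seiberg--Witten invariants; fibered knots yield symplectic members that remain minimal and hence irreducible, while general knots yield non-symplectic members whose irreducibility follows from the structure of their basic classes.

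The main obstacle is the fundamental-group computation. The ball-quotient factor has infinite and complicated $\pi_1$ (a cocompact lattice in $PU(2,1)$), so the van Kampen analysis of the symplectic sum, and the bookkeeping of which generators and relations are killed by each Luttinger surgery, is delicate: one must choose the gluing surfaces and the Lagrangian tori so that the lattice contribution, together with the relations introduced by the surgeries, collapses the entire group to the trivial group without disturbing the already-fixed values of $e$ and $\sigma$. Verifying that the prescribed surgery curves generate all the needed relations, and that finitely many of them suffice, is the technical heart of the argument.
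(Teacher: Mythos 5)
Your outline reproduces the paper's general architecture (symplectic sum with a Bogomolov--Miyaoka--Yau piece, Freedman, Taubes, vanishing of Seiberg--Witten invariants for connected sums, Usher minimality, knot/torus surgery for the infinite families), and your arithmetic of $(e,\sigma,c_1^2,\chi_h)$ is right. But the step you defer --- killing $\pi_1$ --- is not a technicality to be arranged later; it is the actual content of the theorem, and the mechanism you propose for it would fail. Luttinger surgery on a Lagrangian torus $\Lambda$ in the product pieces only imposes relations of the form $\mu_\Lambda\lambda'^{\,n}=1$ on classes living in the complement of the gluing surface on that side of the sum; no amount of such surgeries can trivialize the image of the fundamental group of the ball-quotient factor, which is a cocompact lattice in $PU(2,1)$ and contributes its generators through the other side of the van Kampen decomposition. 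The paper's route is different and essential: it first produces, inside a specific $\bZ_2\times\bZ_2$ unramified cover $\tilde M$ of the Cartwright--Steger surface, a curve $E$ of genus $4$ and self-intersection $-1$ with $i_*\pi_1(E)\rightarrow\pi_1(\tilde M)$ an \emph{isomorphism} (Lemma~\ref{lemm1}); this rests on Proposition~1, i.e.\ on the fact that a certain immersed totally geodesic curve $D$ in the Cartwright--Steger surface satisfies $[\pi_1(M):i_*\pi_1(D)]=4$, verified by explicit Magma computations with the lattice generators $a_1,a_2,a_3$. Blowing up and smoothing turns $E$ into a genus-$5$ symplectic surface $\Sigma_5\subset\tilde M\#\CPb$ of square $-2$ that \emph{carries} $\pi_1(\tilde M\#\CPb)$ (Lemma~\ref{lemm3}). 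Only because of this does the symplectic sum with the simply connected blocks $X_{1,2}\#\CPb$, $X_{2,4}$, $X_{2,3}$ of \cite{AP2,AS} along $\Sigma_5$ (giving $M_{1,9}$, $M_{1,10}$, $M_{2,10}$, i.e.\ the cases $n=9,10$ of (i) and $n=10$ of (ii); the larger $n$ come from \cite{AS} and variations of the second block) yield simply connected manifolds: van Kampen identifies every generator of $\pi_1(\tilde M\#\CPb\setminus\nu\Sigma_5)$ with an element coming from the simply connected side. Without an analogue of Lemma~\ref{lemm3} --- a gluing surface in the positive-signature block surjecting onto its fundamental group --- your construction produces manifolds with infinite $\pi_1$, so the proposal has a genuine gap at its central step.

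Two smaller points: the paper uses a single ball-quotient factor in both cases (i) and (ii), adjusting only the signature of the second block ($\sigma=-2$ versus $\sigma=-1$), rather than a pair of ball quotients for signature $2$; and irreducibility of the non-symplectic members requires the argument via Seiberg--Witten basic classes from \cite{AS}, not just minimality of the symplectic ones.
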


The second theorem above, which deals with the cases of signature equal $1$ and $2$, can be extended to the signature grater than equal $3$ cases as well. 

Let us recall that exotic irreducible smooth structures on $(2n-1)\CP\#(2n-1)\CPb$ for $n\geq 12$, on $(2n-1)\CP\#(2n-2)\CPb$ for $n\geq 14$, on $(2n-1)\CP\#(2n-3)\CPb$ for $n\geq 13$, and on $(2n-1)\CP\#(2n-4)\CPb$ for $n\geq 15$ were constructed in \cite{AS} (see also earlier work in \cite{AP3} and \cite{AHP}). 







Our paper is organized as follows. In Sections~\ref{sec: model} and \ref{sec:Luttinger}, we discuss some background material and collect some building blocks that are needed in our constructions of symplectic $4$-manifolds. In Sections~\ref{sec:ball quotients}, \ref{sec:positive}, \ref{sec:positive CS}, we present the proofs of our main results. A preliminary report on this work has been presented by the first author at Purdue University and by the second author at MPIM and in various research seminars and workshops since November 2018.

\section{Complex surfaces on Bogomolov-Miyaoka-Yau line}\label{sec: model}

\subsection{Fake projective planes}\label{sec: model1}

A fake projective plane is a smooth complex surface which is not the complex projective plane, but has the same Betti numbers as the complex projective plane. The first fake projective plane was constructed by David Mumford in 1979 using p-adic uniformization \cite{Mu}. He also showed that there could only be a finite number of such surfaces. Two more examples were found by Ishida and Kato \cite{IsKa} in 1998, and another by Keum \cite{Ke} in 2006. In 2007 \cite{PY} (see also Addendum \cite{PY1}), the third author and Gopal Prasad almost completely classified fake projective planes by proving that they fall into “28 classes”. Using the arithmeticity of the fundamental group of fake projective planes, and the formula for the covolume of principal arithmetic subgroups, they found twenty eight distinct classes of fake projective planes. For a very small number of classes, they left open the question of existence of fake projective planes in that class, but conjectured that there are none. Finally, Donald Cartwright and Tim Steger verified their conjecture and found all the fake projective planes, up to isomorphism, in each of the 28 classes \cite{CS}.

\begin{exam} In this example, we recall some properties of fake projective plane $M$. We refer the reader to \cite{Y} and also \cite{Ye}, where a complete classification of all smooth surfaces of general type with Euler number $3$ is given. There are $50$ pairs of fake projectives planes as classified in \cite{PY, PY1, CKY} and one Cartwright-Steger surface to be explained in {\bf 2.2}.

For fake projective planes, the Euler characteristic and the Betti numbers of $M$ are $e(M)=3$, $b_{1}(M)=0$ and $b_{2}(M)=1$. $M$ is a minimal complex surface of general type with $\sigma(M) =1$, ${c_{1}}^2(M)=3e(M) = 9$ and $\chi_{h}(M) = 1$. The intersection form of $M$ is odd, and has rank $1$. The fundamental group $\Pi$ of $M$ is a torsion-free cocompact arithmetic subgroup of $PU(2, 1)$, thus $M$ is a ball quotient $B_{\mathbb{C}}^{2}/\Pi$. For $46$ pairs of fake projecitve planes, the canonical line bundle $K_M$ is divisible by $3$, i.e., there is a line bundle $L$ such that $K_M=3L$.  For the remaining four pairs of fake projective planes, we know that $K=3H+\tau$ for some torsion line bundle $\tau$. It was mentioned that the class of $H$ can be represented by a symplectic surface $H$ of self-intersection $1$ (see discussion in \cite{Ko}, pages 212-213), but  notice that Taubes result concerning existence of pseudoholomorphic curves does not apply to the classes $H$ or $2H$ in this case (\cite{L-T}). By considering the classes $pH$ for any positive integer $p \geq 3$, we can produce symplectic surface $H(p)$ of self-intersection $p^2$ and the genus $g(H(p)) = 1 + 1/2(pH \cdot pH + 3H \cdot pH) = 1 + p(p+3)/2$. These symplectic surfaces in $M$ are quite useful. Using the symplectic connected sum operation \cite{gompf}, the pair $(M, H(p))$ (or the covers of $M$ on BMY line) and the knot surged homotopy elliptic surfaces $E(n)_{K}$ along with the symplectic submanifold $S_{K}$ can be used to construct exotic symplectic $4$-manifolds with positive signature and near the Bogomolov-Miyaoka-Yau line $c_1^2 = 9\chi_h$. The symplectic surface $S_{K}$ above in $E(n)_{K}$ is a higher genus section of self-intersection $-n$ resulting  from $-n$ sphere section of $E(n)$ under the the knot surgery along a fibered knot $K$ of genus $g$. To make this construction work, one needs to set $n=p^2$ and $g=1+p(p+3)/2$. Since $\pi_1(E(n)_{K} \setminus S_{K})$ is trivial and $\pi_{1}(H(p))$ surjects into $\pi_{1}(M)$, the resulting symplectic $4$-manifold is simply connected. 
\end{exam}

\subsection{Complex surfaces of Cartwright and Steger}
\label{sec: model2}

The study of enumerating the set of all fake projective planes in the so-called class $\mathcal{C}_{11}$ in the notation of  \cite{PY} led Donald Cartwright and Tim Steger to discover a complex surface with irregularity $q=1$ and Euler characteristic $e=3$, named as Cartwright-Steger surface. Cartwright and Steger showed that a certain maximal arithmetic subgroup $\bar{\Gamma}$ of $PU(2; 1)$ contains a torsion-free subgroup $\Pi$ of index $864$ which has abelianization $\mathbb{Z}^2$. Such subgroup $\Pi$ is unique up to conjugation. Furthermore, for each ineger $n \geq 1$, $\Pi$ has a normal subgroup $\Pi_{n}$ of index $n$. Let $M_{n} = B^{2}(\mathbb{C})/\Pi_{n}$ denote the quotient of a complex hyperbolic space by a torsion free lattice $\Pi_{n}$ of $PU(2; 1)$. The Euler characteristic of $M_{n}$\/ is $e(M_{n})=ne(M_{1})=3n$. $M_{n}$\/ is a minimal complex surface of general type with $\sigma(M_{n}) = n$, ${c_{1}}^2(M_{n})=3e(M_{n}) = 9n$ and $\chi_{h}(M_{n}) = n$. The intersection form of $M_{1}$ is odd, indefinite and modulo torsion is isomorphic to $3\langle 1 \rangle \oplus 2\langle -1 \rangle$. The Betti numbers of $M_{1}$ are: $1, 2, 5, 2, 1$. It is known that the Albanese map of $M_{1}$ gives rise to an Albanese fibration with generic fiber of genus $19$ \cite{CKY}.

\subsection{The covers of Cartwright-Steger surface}
\label{forM1}
Let us recall the following from \cite{CKY}. In one of our constructions we will be using the curves $b(M_c)$ or $ b^{-1}(M_c)$ in Proposition 2.4 of \cite{CKY}. For simplicity, let us consider $D = b(M_c)$.

Recall that in the notation of \cite{CS} and \cite{PY}, the maximal arithmetic lattice considered in this case is denoted by $\oGamma$ summarized in Theorem 1 of \cite{CKY}. The lattice of the Cartwright-Steger surface is denoted by $\Pi$ with generators given by $a_1$, $a_2$, $a_3$ explained in Theorem 2 of \cite{CKY}.

The map $\pi:M=B_{\bC}^2/\Pi\rightarrow B_{\bC}^2/\oGamma$ is a covering map of order $864$.
The quotient $B_{\bC}^2/\oGamma$ is represented by the right hand side of Figure 1 of \cite{CKY}.
$D$ is a component of $\pi^{-1}(D_A)$ in the picture and $\pi^{-1}(D_A)$ is an immersed totally geodesic curve. The singularities of $D$ could only be found in $\pi^{-1}(P_1)$ and $\pi^{-1}(P_2)$. $D$ is a component of genus $4$ in $\pi^{-1}(D_A)$. According to Proposition 2.4 of \cite{CKY},  the only singular points of the curve $D$ is given by a point of normal crossing given by $n_{-1}(D)=2$.

By Proposition 2.4(d) and its proof in \cite{CKY}, $\Pi_M\backslash M$ has genus~4 by the Riemann-Hurwitz formula, and we can find explicit generators $u_i$, $v_i$ of~$\Pi_M$ such that $[u_1,v_1][u_2,v_2][u_3,v_3][u_4,v_4]=1$. When $M=b(M_c)$, the following eight elements generate $\Pi_M$:
\begin{displaymath}
\begin{aligned}
p_1&=a_2^3a_1^{-1}a_3^{-1}j^8a_2^{-2}a_1^{-1}j^4,\\
p_2&=a_3^3a_1a_3^2a_2a_1j^4a_3^{-1}j^8a_3^{-2}a_1^{-1}a_3^{-3},\\
p_3&=j^8a_1^{-1}a_3^{-3}a_2^2j^4a_3^{-2}a_1^{-1}a_3^{-3},\\
p_4&=j^8a_2a_1a_2^{-2}a_1^{-1}j^4a_3^3a_1^2a_2^{-1},\\
\end{aligned}
\quad
\begin{aligned}
p_5&=a_3^3a_1a_3^2j^4a_1^{-1}j^8a_3^2a_1a_2^{-3},\\
p_6&=a_3^3a_1a_2a_1a_3a_2^{-3},\\
p_7&=a_3^3a_1j^8a_1a_2^{-2}a_1^{-1}a_3^2j^4,\\
p_8&=j^4a_3^{-2}j^8a_2a_1a_2a_1a_2^{-2},\\
\end{aligned}
\end{displaymath}
and satisfy the single relation
\begin{displaymath}
p_5^{-1}p_2^{-1}p_5p_1p_3p_8^{-1}p_4p_1^{-1}p_7^{-1}p_6^{-1}p_7p_2p_3^{-1}p_8p_4^{-1}p_6=1.
\end{displaymath}
Here the group elements such as $j$ is given by {\bf 1.1} of \cite{CKY}.
The above gives a set of generators for $\pi_1(\hat{D})$, where  $\hat{D}$ is the normalization  of $D$.

To compute $i_*(\pi_1(D))\subset \pi_1(M)$, where $i:D\rightarrow M$ is the inclusion, note that 
$i_*(\pi_1(D))$ is generated by $i_*p_j, j=1,\dots,8$, together with loops around the nodal point.
The following two elements $\pi_1$ and  $\pi_2$ of $\Pi$ satisfy $\pi(b^{-1}(O))\in b(M_c)$, are taken from the third table on page 41 of the arXiv version of the paper \cite{CKY}:\\
$$\pi_1:=a_3^3a_1a_2^{-1}, \ \ 
\pi_2:=a_2a_1^{-2}a_3^{-1}a_1a_3^{-1}a_1^{-1}a_2^{-2}$$
Consider the subgroup $G_1$ of $G$ given by $G_1 = \, < G \, | \, p_1,p_2,p_3,p_4,p_5,p_6,p_7,p_8,\pi_1\pi_2^{-1}>.$
By using the Magma program, one can verify that $G_1$ is a normal subgroup of $\Pi$ of order $4$.  Moreover, it can be verified that the quotient group is $\bZ_2\times\bZ_2$.  No larger subgroup of $G$ containing $G_1$ could
be found and hence $G_1$ is our candidate $i_*(\pi_1(D))$. The authors are grateful to Donald Cartwright for his help with Magma computations.

In conclusion, we have the following.
\begin{prop}
$D$ is an immersed totally geodesic curve satisfying the following properties.
\begin{enumerate}  
\item The normalization $\hat{D}$ of $D$ is a Riemann surface of genus $4$.\\
\item  $D\cdot D=-1$.\\
\item $i_*\pi_1(D)$ is a normal subgroup of $\pi_1(M)$ of index $4$, 
and $\pi_1(M)/i_*\pi_1(D)=\bZ_2\times\bZ_2$.
\end{enumerate}  
\end{prop}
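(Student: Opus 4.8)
The plan is to dispatch the two geometric assertions (1) and (2) from the known structure of $\pi^{-1}(D_A)$ together with Proposition~2.4 of \cite{CKY}, and to devote the real work to the group-theoretic statement (3).

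For assertions (1) and (2) I would proceed as follows. Since $D$ is by construction one component of $\pi^{-1}(D_A)$, and the latter is an immersed totally geodesic curve, $D$ is immersed and totally geodesic, with singularities only over $P_1,P_2$; Proposition~2.4(d) of \cite{CKY} singles out the genus-$4$ component and shows that its normalization $\hat{D}$ is a closed Riemann surface of genus $4$, which is (1). This is consistent with the eight generators $p_1,\dots,p_8$ subject to a single length-$16$ relation, the standard presentation of a genus-$4$ surface group. For (2) I would combine adjunction with the totally geodesic proportionality. Writing $f:\hat{D}\to M$ for the normalization map, the curvature computation for a totally geodesic curve in a complex ball quotient gives $\deg N=\tfrac12\deg T_{\hat{D}}=1-g$ for the normal line bundle $N$ of the immersion, whence $K_M\cdot D=-\deg f^*T_M=3g-3$. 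Since $D$ has a single node ($n_{-1}(D)=2$), its arithmetic genus is $g+1$, and adjunction $2(g+1)-2=K_M\cdot D+D\cdot D$ yields $D\cdot D=-g+3=-1$ for $g=4$.

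The heart of the argument is (3), where I would compute $i_*\pi_1(D)$ explicitly inside $\Pi=\pi_1(M)$. The inclusion $i:D\to M$ sends the generators $p_1,\dots,p_8$ of $\pi_1(\hat{D})$ into $\Pi$, but since $D$ is the nodal image rather than its normalization, $i_*\pi_1(D)$ is generated by the $i_*p_j$ together with a loop encircling the node. The crucial local point is that the two branches of $D$ meeting at the node are accounted for by the two group elements $\pi_1,\pi_2$ attached to the preimage $b^{-1}(O)$, so that a small loop about the node is represented by $\pi_1\pi_2^{-1}$; hence $i_*\pi_1(D)=\langle p_1,\dots,p_8,\pi_1\pi_2^{-1}\rangle=G_1$. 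It then remains to identify $G_1$ in $\Pi$: using the presentation of $\Pi$ coming from \cite{CS,CKY} and a Magma computation, one verifies that $G_1$ is normal of index $4$ with $\Pi/G_1\cong\bZ_2\times\bZ_2$, and that no strictly larger subgroup containing $G_1$ occurs, which fixes $G_1$ as exactly $i_*\pi_1(D)$.

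I expect the principal obstacle to lie in this last step, and specifically in justifying that the single extra generator contributed by the node is precisely $\pi_1\pi_2^{-1}$: one must match the two analytic branches at the node with the group elements $\pi_1,\pi_2$ and check the orientation and conjugacy conventions so that the loop class is as claimed, rather than, say, a conjugate or inverse of it. Once that identification is secured, the assertions of normality, index $4$, and the $\bZ_2\times\bZ_2$ quotient reduce to a finite and essentially mechanical Magma verification in the given presentation of $\Pi$.
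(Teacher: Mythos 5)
Your proposal follows the paper's argument essentially verbatim: parts (1) and the singular-point data are quoted from Proposition 2.4 of \cite{CKY}, and part (3) is obtained exactly as in the paper by adjoining the nodal loop $\pi_1\pi_2^{-1}$ to $\langle p_1,\dots,p_8\rangle$ to form $G_1$ and then verifying normality, index $4$, and the $\bZ_2\times\bZ_2$ quotient by a Magma computation in the presentation of $\Pi$. The only addition is your self-contained derivation of $D\cdot D=-1$ via the totally geodesic proportionality $\deg N=\tfrac12\deg T_{\hat D}$ and adjunction for the nodal image (which the paper simply imports from \cite{CKY}); that computation is correct, and your caveat about rigorously identifying the nodal loop with $\pi_1\pi_2^{-1}$ mirrors the paper's own hedging, which only claims $G_1$ as ``our candidate'' for $i_*(\pi_1(D))$.
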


Denote by $H$ the covering group $\pi_1(M)/i_*\pi_1(D)=\bZ_2\times\bZ_2$ in Proposition 1.
We have
$$1\rightarrow i_*\pi_1(D)\rightarrow \pi_1(M)\rightarrow H.$$
Consider now a normal unramified covering $\tilde{M}$ of $M$ with covering group given by $H$.  Let $p:\tilde{M}\rightarrow M$ be the covering map.  
From construction, $p^{-1}(D)$ consists of four connected components.  Let $E$ be one such connected component.  Then from construction,
inclusion $i_*\pi_1(E)\rightarrow \pi_1(\tilde{M})$ is an isomorphism.   Hence we have

\begin{lemm}\label{lemm1}
$E$ is a curve of self-intersection $-1$ on $\tilde{M}$.  The normalization of $E$ is a Riemann surface of genus $4$.  Moreover,
$i_*\pi_1(E)\rightarrow \pi_1(\tilde{M})$ is an isomorphism.
\end{lemm}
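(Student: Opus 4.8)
The plan is to deduce all three assertions from a single structural fact: the covering map $p$ restricts to a biholomorphism $p|_E\colon E\to D$. Granting this, part (2) is immediate, since $p|_E$ identifies the (possibly singular) curves $E$ and $D$ and hence their normalizations, so that $\hat E\cong\hat D$ is a Riemann surface of genus $4$ by Proposition~1(1); part (1) follows because an étale map which is bijective on the compact curve $E$ restricts to a biholomorphism of a neighborhood of $E$ onto a neighborhood of $D$, so the local self-intersection data agree and $E\cdot E=D\cdot D=-1$ by Proposition~1(2); and part (3) becomes a short diagram chase. Thus the whole proof reduces to establishing the biholomorphism.

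First I would record that, by the very definition of $\tM$ as the normal unramified cover of $M$ attached to the surjection $\pi_1(M)\twoheadrightarrow H=\pi_1(M)/i_*\pi_1(D)$, the covering subgroup is $p_*\pi_1(\tM)=N:=i_*\pi_1(D)$, which is normal of index $4$ with $\pi_1(M)/N\cong\bZ_2\times\bZ_2$ by Proposition~1(3). The crucial step is then the standard covering-space description of $p^{-1}(D)$: its connected components are indexed by the double cosets $N\backslash\pi_1(M)/G$ with $G=i_*\pi_1(D)$, and the component attached to $NgG$ covers $D$ with degree $[G:G\cap g^{-1}Ng]$. Here $G=N$ and $N$ is normal, so every double coset is an ordinary coset (recovering the four components already noted), and for each of them $g^{-1}Ng=N=G$, so the covering degree is $[N:N]=1$. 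Hence every component, in particular $E$, maps biholomorphically onto $D$.

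Parts (1) and (2) then follow as indicated; for (1) I would give the clean global check that, since $p$ is étale and each component maps with degree $1$, the pullback divisor is the reduced sum $p^*D=E_1+\dots+E_4$. The projection formula yields $(p^*D)^2=(\deg p)(D\cdot D)=4\cdot(-1)=-4$; the four components are disjoint, so all cross terms $E_i\cdot E_j$ with $i\neq j$ vanish; and the deck group $H$ permutes them transitively (normality of $N$), so they share a common self-intersection, forcing $E\cdot E=-1$. For (3) I would chase the relation $p_*\circ i_{E*}=i_*\circ(p|_E)_*$ coming from $p\circ i_E=i\circ p|_E$: since $(p|_E)_*$ is onto and $i_*\pi_1(D)=N$, applying $p_*$ gives $p_*\bigl(i_{E*}\pi_1(E)\bigr)=i_*\pi_1(D)=N=p_*\pi_1(\tM)$, and the injectivity of $p_*$ on $\pi_1(\tM)$ forces $i_{E*}\pi_1(E)=\pi_1(\tM)$, which is precisely the asserted isomorphism of subgroups.

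The genuine mathematical input on which everything rests is Proposition~1(3) — the identification of $N$ with $i_*\pi_1(D)$ and its normality and index, which is where the Magma computation enters; once this is in hand the covering-theoretic bookkeeping is routine. The only point demanding care is the unique node of $D$, and it is handled entirely by étale-ness: because $p$ is a local biholomorphism, the node of $D$ lifts to a node of $E$ with both branches in the same component, which simultaneously justifies that $p|_E$ is an isomorphism of the singular curves (not merely of their normalizations), that the computation of $(p^*D)^2$ carries no multiplicities or stray intersection points at the node, and that $\hat E\cong\hat D$. I would also flag explicitly that, because $N$ is a cocompact lattice in $PU(2,1)$ and hence one-ended, the map $\pi_1(E)\to\pi_1(\tM)$ is \emph{not} injective (its source contains the free factor generated by the vanishing cycle of the node); accordingly the word ``isomorphism'' in (3) must be read as the equality of subgroups $i_*\pi_1(E)=\pi_1(\tM)$, which is exactly what the diagram chase delivers.
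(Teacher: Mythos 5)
Your proposal is correct and takes essentially the same route as the paper: the paper's entire proof is the observation that, since the covering is normal with covering subgroup $i_*\pi_1(D)$, a neighborhood of $E$ in $\tilde M$ is isomorphic to a neighborhood of $D$ in $M$, from which all three claims follow "from construction." You have simply written out the covering-space bookkeeping (degree-one components, projection formula, $\pi_1$ diagram chase) that the paper leaves implicit, and your closing caveat about reading the $\pi_1$ statement as equality of the image subgroup with $\pi_1(\tilde M)$ matches the paper's own phrasing.
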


This follows from construction.  Note that a neighborhood of $D$ in $M$ is isomorphic to a neighborhood of $E$ in $\tilde{M}$, as
the covering is a normal covering with $\pi_1(\tilde{M})$ a normal subgroup of $\Pi$.

\begin{lemm}\label{lemm2}
The Chern numbers of $\tilde{M}$ are given by
$c_1^2(\tilde{M})=36$, $c_2(\tilde{M})=12$.
\end{lemm}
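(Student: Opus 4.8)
The plan is to deduce both Chern numbers from the multiplicativity of characteristic numbers under finite unramified coverings, using the values already recorded for the base Cartwright--Steger surface $M=M_1$. Recall from Subsection~\ref{sec: model2} that $M$ satisfies $c_1^2(M)=9$ and $c_2(M)=e(M)=3$. By the construction preceding Lemma~\ref{lemm1}, the map $p:\tilde{M}\to M$ is a normal \emph{unramified} covering with covering group $H=\bZ_2\times\bZ_2$, so its degree is $|H|=4$.

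First I would observe that, since $p$ is \'etale, the holomorphic tangent bundle pulls back, $T_{\tilde{M}}\cong p^{*}T_{M}$, and consequently the Chern classes satisfy $c_i(\tilde{M})=p^{*}c_i(M)$ in $\HH^{*}(\tilde{M};\bZ)$ for $i=1,2$. Next I would invoke the standard fact that integration of a top-degree class over the total space of a degree-$d$ covering multiplies the integral over the base by $d$ (equivalently $p_{*}[\tilde{M}]=d\,[M]$): for every $\alpha\in \HH^{4}(M;\bZ)$ one has $\int_{\tilde{M}}p^{*}\alpha = d\int_{M}\alpha$. Applying this with $d=4$ to $\alpha=c_1^2(M)$ and to $\alpha=c_2(M)$ gives
\[
c_1^2(\tilde{M}) = 4\,c_1^2(M) = 4\cdot 9 = 36, \qquad c_2(\tilde{M}) = 4\,c_2(M) = 4\cdot 3 = 12.
\]

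Alternatively, and perhaps more transparently, I would argue the two numbers separately. The Euler characteristic is multiplicative under finite covers, so $c_2(\tilde{M})=e(\tilde{M})=4\,e(M)=12$. For $c_1^2$, since the cover is unramified there is no ramification divisor, hence $K_{\tilde{M}}=p^{*}K_{M}$, and intersecting with itself yields $c_1^2(\tilde{M})=K_{\tilde{M}}^2=(p^{*}K_M)^2=4\,K_M^2=4\cdot 9=36$. Either route produces the claimed values.

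The computation presents no genuine obstacle. The only points requiring care are confirming that the covering degree equals $|H|=4$ and that the covering is truly unramified; both are guaranteed by the construction of $\tilde{M}$ in Lemma~\ref{lemm1} as a normal \'etale cover with Galois group $H$, which is precisely what ensures that no ramification term enters the canonical class and that the characteristic numbers scale by exactly the degree.
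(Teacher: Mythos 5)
Your proposal is correct and takes essentially the same approach as the paper, which simply notes that the Chern numbers are multiplicative under the degree-$4$ unramified cover $p:\tilde{M}\to M$ with covering group $H=\bZ_2\times\bZ_2$, yielding $c_1^2(\tilde{M})=4\cdot 9=36$ and $c_2(\tilde{M})=4\cdot 3=12$. Your write-up merely supplies the standard justification (pullback of the tangent bundle and $p_*[\tilde{M}]=4[M]$) that the paper leaves implicit.
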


This follows from the fact that the Chern numbers involved are multiplicative.

\begin{lemm}\label{lemm3}
$\tilde{M} \#\CPb$ contains a symplectic genus $5$ curve $\Sigma_5$ of self intersection $-2$ that carries the fundamental group of $\tilde{M}\#\CPb$
\end{lemm}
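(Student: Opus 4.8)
The plan is to produce the genus-$5$ symplectic curve by resolving the normal crossing singularity of the image curve inside a blow-up, and then to verify that this curve carries the full fundamental group. Recall from Lemma~\ref{lemm1} that $E\subset\tilde M$ is a curve of self-intersection $-1$ whose normalization has genus $4$, and whose inclusion induces an isomorphism $i_*\pi_1(E)\to\pi_1(\tilde M)$. The starting point is to understand the singularity structure of $E$: since $E$ is a component of the preimage of the totally geodesic curve $D$, and $D$ carries a single node ($n_{-1}(D)=2$ in the earlier notation), I expect $E$ to have exactly one node as well. Blowing up $\tilde M$ at that node gives $\tilde M\#\CPb$, and the proper transform $\Sigma$ of $E$ is now a smoothly embedded curve. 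The genus of $\Sigma$ should be $5$: geometrically, resolving one node of an immersed curve whose normalization has genus $4$ glues together the two local branches, raising the arithmetic genus of the embedded curve by one, so $g(\Sigma)=4+1=5$.

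Next I would compute the self-intersection. Blowing up a node separates the two branches meeting there, and the proper transform picks up $-2$ from the two branches each passing once through the exceptional divisor, so $\Sigma\cdot\Sigma = E\cdot E - 2 = -1 - 2 = -3$. This disagrees with the stated value $-2$, which tells me the correct bookkeeping is more delicate: one must track whether $E$ is to be taken as the image curve with its node, or as the totally geodesic immersed curve with $D\cdot D=-1$ already accounting for the self-intersection of the \emph{immersed} class. I would therefore recompute $\Sigma\cdot\Sigma$ carefully using the relation $\Sigma = p^*E - m\,\mathcal E$ where $\mathcal E$ is the exceptional sphere and $m$ is the multiplicity of $E$ at the node (here $m=2$), giving $\Sigma\cdot\Sigma = E\cdot E - m^2 = -1-\ldots$; matching the target $-2$ will pin down the correct multiplicity/genus accounting, and this reconciliation is the step I expect to demand the most care.

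For the symplectic structure, I would invoke that $E$, being totally geodesic in the complex-hyperbolic metric, is a complex (hence symplectic) curve in $\tilde M$, and that blowing up at a point is a symplectic operation (Gompf); the proper transform of a symplectic curve through the blow-up center, suitably smoothed at the node, is again symplectic. Concretely, one realizes the node resolution as a symplectic smoothing that replaces the two intersecting symplectic branches by a single embedded symplectic curve, which is standard.

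Finally, for the fundamental-group claim I would argue that $\pi_1(\tilde M\#\CPb)\cong\pi_1(\tilde M)$, since connected-summing with $\CPb$ (equivalently, blowing up a point) does not change $\pi_1$. The proper transform $\Sigma$ is the image of the normalization $\hat E$ under the resolution, so $i_*\pi_1(\Sigma)$ equals the image of $i_*\pi_1(\hat E)=i_*\pi_1(E)$, together with the meridian of the exceptional divisor, which is null-homotopic. By Lemma~\ref{lemm1} this image is all of $\pi_1(\tilde M)=\pi_1(\tilde M\#\CPb)$, so $\Sigma$ carries the fundamental group. The main obstacle, as noted, is the precise self-intersection and genus computation at the resolved node; once the local multiplicity is correctly identified, the symplectic and $\pi_1$ statements follow from standard blow-up and Seif–van Kampen arguments.
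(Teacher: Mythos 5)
There is a genuine gap, and it sits exactly where you flag uncertainty. Blowing up $\tilde{M}$ at the node of $E$ does not glue the two local branches together --- it separates them: the proper transform is the normalization of $E$, an \emph{embedded} curve of genus $4$ (not $5$) with self-intersection $E\cdot E - m^2 = -1 - 4 = -5$, meeting the exceptional sphere $e_1$ transversally in two points (one for each branch through the node). So neither the genus nor the square of the proper transform matches the statement, and no amount of multiplicity bookkeeping on the proper transform alone will reconcile the numbers, because that curve simply is not $\Sigma_5$. The step your proposal is missing --- and the key idea of the paper's proof --- is to take the union of the proper transform with $e_1$ and symplectically smooth their two intersection points. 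Smoothing $k$ transverse positive intersections of symplectic surfaces of genera $g_1$ and $g_2$ produces a symplectic surface of genus $g_1+g_2+k-1 = 4+0+2-1 = 5$ in the class $[\tilde{E}]+[e_1]$, whose square is $-5 + 2\cdot 2 + (-1) = -2$. That is the curve $\Sigma_5$ of the lemma.

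Your fundamental-group argument is essentially the paper's and is fine once $\Sigma_5$ is correctly built: $\pi_1(\tilde{M}\#\CPb)\cong\pi_1(\tilde{M})$, and since $\Sigma_5$ contains (a small perturbation of) the normalization of $E$, whose inclusion already induces an isomorphism onto $\pi_1(\tilde{M})$ by Lemma~\ref{lemm1}, the curve $\Sigma_5$ carries the full fundamental group. The gap is entirely in the construction of the genus-$5$, square-$(-2)$ curve itself.
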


\begin{proof}
It was shown in Lemma \ref{lemm1} that $\tilde{M}$ contains a curve $E$ of self intersection $-1$, whose normalization is a Riemann surface of genus $4$. Since genus is a birational invariant, the genus of $E$ is $4$ as well. We symplectically blow up $E$ at its self intersection, so that it becomes square $-5$ curve and the exceptional sphere $e_1$ intersects it twice. We symplectically smooth two intersections points of the proper transform of $E$ with $e_1$, which gives us genus $5$ symplectic curve $\Sigma_5$ of self intersection $-2$ inside $\tilde{M} \#\CPb$. Since $i_*\pi_1(E)\rightarrow \pi_1(\tilde{M})$ is an isomorphism, we see that  $\Sigma_5$
carries the fundamental group of $\tilde{M}\#\CPb$.
\end{proof}

\section{Luttinger surgery and symplectic cohomology $(2n-3)(\mathbb{S}^2\times \mathbb{S}^2)$}\label{sec:Luttinger}

We briefly review the Luttinger surgery, and collect some symplectic building blocks that will be used later in our constructions. For the details on Luttinger surgery, the reader is referred to the papers \cite{lu} and \cite{ADK}. 

\begin{defi} Let $X$\/ be a symplectic $4$-manifold with a symplectic form $\omega$, and the torus $\Lambda$ be a Lagrangian submanifold of $X$. Given a simple loop $\lambda$ on $\Lambda$, let $\lambda'$ be a simple loop on $\partial(\nu\Lambda)$ that is parallel to $\lambda$ under the Lagrangian framing. For any integer $n$, the $(\Lambda,\lambda,1/n)$ Luttinger surgery\/ on $X$\/ defined to be the $X_{\Lambda,\lambda}(1/n) = ( X - \nu(\Lambda) ) \cup_{\phi} (\mathbb{S}^1 \times \mathbb{S}^1 \times \mathbb{D}^2)$,  the $1/n$\/ surgery on $\Lambda$ with respect to $\lambda$ under the Lagrangian framing. Here $\phi : \mathbb{S}^1 \times \mathbb{S}^1 \times \partial \mathbb{D}^2 \to \partial(X - \nu(\Lambda))$ denotes a gluing map satisfying $\phi([\partial \mathbb{D}^2]) = n[{\lambda'}] + [\mu_{\Lambda}]$ in $H_{1}(\partial(X - \nu(\Lambda))$, where $\mu_{\Lambda}$ is a meridian of $\Lambda$. \end{defi}

It is  shown in \cite{ADK} that $X_{\Lambda,\lambda}(1/n)$ possesses a symplectic form that restricts to the original symplectic form $\omega$ on $X\setminus\nu\Lambda$. The proof of the following lemma is easy to verify and is left to the reader as an exercise.
\begin{lemm}\label{invariants}
\begin{enumerate}  
\noindent \item $\pi_1(X_{\Lambda,\lambda}(1/n)) = \pi_1(X- \Lambda)/N(\mu_{\Lambda} \lambda'^n)$, where 
$N(\mu_{\Lambda} \lambda'^n)$ denote the smallest normal subgroup of $\pi_1(X- \Lambda)$ that contains $\mu_{\Lambda} \lambda'^n$ 
\noindent \item $\sigma(X)=\sigma(X_{\Lambda,\lambda}(1/n))$ and $e(X)=e(X_{\Lambda,\lambda}(1/n))$.
\end{enumerate}
\end{lemm}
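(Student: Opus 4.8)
The plan is to treat the two assertions separately: the first by a Seifert--van Kampen computation, and the second by the standard additivity properties of the Euler characteristic and of the signature under gluing along a closed $3$-manifold.

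For part (1), I would write $X_{\Lambda,\lambda}(1/n) = (X - \nu(\Lambda)) \cup_{\phi} (\mathbb{S}^1 \times \mathbb{S}^1 \times \mathbb{D}^2)$ as in the definition, observe that the two pieces overlap in a collar of the common boundary $\partial(X - \nu(\Lambda)) \cong T^3$, and apply Seifert--van Kampen. Since the tubular neighborhood $\nu(\Lambda)$ deformation retracts onto $\Lambda$, removing the open neighborhood does not change the fundamental group, so $\pi_1(X - \nu(\Lambda)) = \pi_1(X - \Lambda)$. The glued-in piece $\mathbb{S}^1 \times \mathbb{S}^1 \times \mathbb{D}^2$ has $\pi_1 = \mathbb{Z}^2$, and its two circle factors map, under the boundary identification, to loops already present in $\pi_1(X - \Lambda)$; the only new relation comes from the disk factor, whose boundary $[\partial\mathbb{D}^2]$ becomes null-homotopic in the solid-torus-type piece. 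Transporting this through the gluing map and using $\phi([\partial\mathbb{D}^2]) = n[\lambda'] + [\mu_{\Lambda}]$ shows that van Kampen adjoins exactly the relation $\mu_{\Lambda}(\lambda')^n = 1$. Hence $\pi_1(X_{\Lambda,\lambda}(1/n)) = \pi_1(X - \Lambda)/N(\mu_{\Lambda}(\lambda')^n)$.

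For part (2), I would use that both $X$ and $X_{\Lambda,\lambda}(1/n)$ are assembled from the common piece $X - \nu(\Lambda)$ by gluing back a copy of $\nu(\Lambda) \cong T^2 \times \mathbb{D}^2$ along $T^3$, the sole difference being the gluing diffeomorphism. For the Euler characteristic, additivity under such a decomposition gives $e(Y) = e(X - \nu(\Lambda)) + e(T^2 \times \mathbb{D}^2) - e(T^3)$ for $Y$ either manifold; since $e(T^2 \times \mathbb{D}^2) = 0$ and $e(T^3) = 0$, both yield $e(X - \nu(\Lambda))$, so $e(X) = e(X_{\Lambda,\lambda}(1/n))$. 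For the signature, \emph{Novikov additivity} --- which expresses the signature of a closed oriented $4$-manifold split along a closed $3$-manifold as the sum of the signatures of its two pieces, independently of the gluing map --- gives $\sigma(Y) = \sigma(X - \nu(\Lambda)) + \sigma(T^2 \times \mathbb{D}^2)$ for both choices of $Y$. As the two glued-in pieces are diffeomorphic (each $T^2 \times \mathbb{D}^2$, with vanishing signature), we conclude $\sigma(X) = \sigma(X_{\Lambda,\lambda}(1/n))$.

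The computations are routine; the only point requiring care is the bookkeeping in part (1), namely identifying the Lagrangian-framed push-off $\lambda'$ and the meridian $\mu_{\Lambda}$ as specific elements of $\pi_1(\partial(X - \nu(\Lambda))) = \pi_1(T^3)$ and tracking their images in $\pi_1(X - \Lambda)$, so that the single van Kampen relation reads precisely $\mu_{\Lambda}(\lambda')^n$. This is exactly where the choice of Lagrangian framing enters and is the only subtle step; everything else follows formally.
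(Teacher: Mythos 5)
Your proof is correct, and it is exactly the standard argument the paper has in mind: the paper states that the proof ``is easy to verify and is left to the reader as an exercise,'' and the intended verification is precisely the Seifert--van Kampen computation for part (1) and the additivity of the Euler characteristic together with Novikov additivity of the signature (valid here because the regluing is along the closed $3$-manifold $T^3$, so Wall's correction term does not arise) for part (2). Nothing further is needed.
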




\subsection{Luttinger surgeries on product manifolds $\Sigma_{n}\times \Sigma_{2}$ and $\Sigma_{n}\times \mathbb{T}^2$}\label{L}

Recall from \cite{FPS, AP1} that for each integer $n\geq2$, there is a family of irreducible pairwise non-diffeomorphic 4-manifolds $\{Y_n(m)\mid m=1,2,3,\dots\}$ that have the same integer cohomology ring as $(2n-3)(\mathbb{S}^2\times \mathbb{S}^2)$. $Y_n(m)$ are obtained by performing $2n+3$ Luttinger surgeries (cf.\ \cite{ADK, lu}) and a single $m$\/ torus surgery on $\Sigma_2\times \Sigma_n$. These $2n+4$ torus surgeries are performed as follows
\begin{eqnarray}\label{first 8 Luttinger surgeries}
&&(a_1' \times c_1', a_1', -1), \ \ (b_1' \times c_1'', b_1', -1), \ \
(a_2' \times c_2', a_2', -1), \ \ (b_2' \times c_2'', b_2', -1),\\ \nonumber
&&(a_2' \times c_1', c_1', +1), \ \ (a_2'' \times d_1', d_1', +1),\ \
(a_1' \times c_2', c_2', +1), \ \ (a_1'' \times d_2', d_2', +m),
\end{eqnarray}
together with the following $2(n-2)$ additional Luttinger surgeries
\begin{gather*}
(b_1'\times c_3', c_3',  -1), \ \ 
(b_2'\times d_3', d_3', -1), \  \dots  ,\ 
(b_1'\times c_n', c_n',  -1), \ \
(b_2'\times d_n', d_n', -1).
\end{gather*}
Here, $a_i,b_i$ ($i=1,2$) and $c_j,d_j$ ($j=1,\dots,n$) denote the standard loops that generate $\pi_1(\Sigma_2)$ and $\pi_1(\Sigma_n)$, respectively. See Figure~\ref{fig:lagrangian-pair} for a typical Lagrangian tori along which the surgeries are performed.  
\begin{figure}[ht]
\begin{center}
\includegraphics[scale=.89]{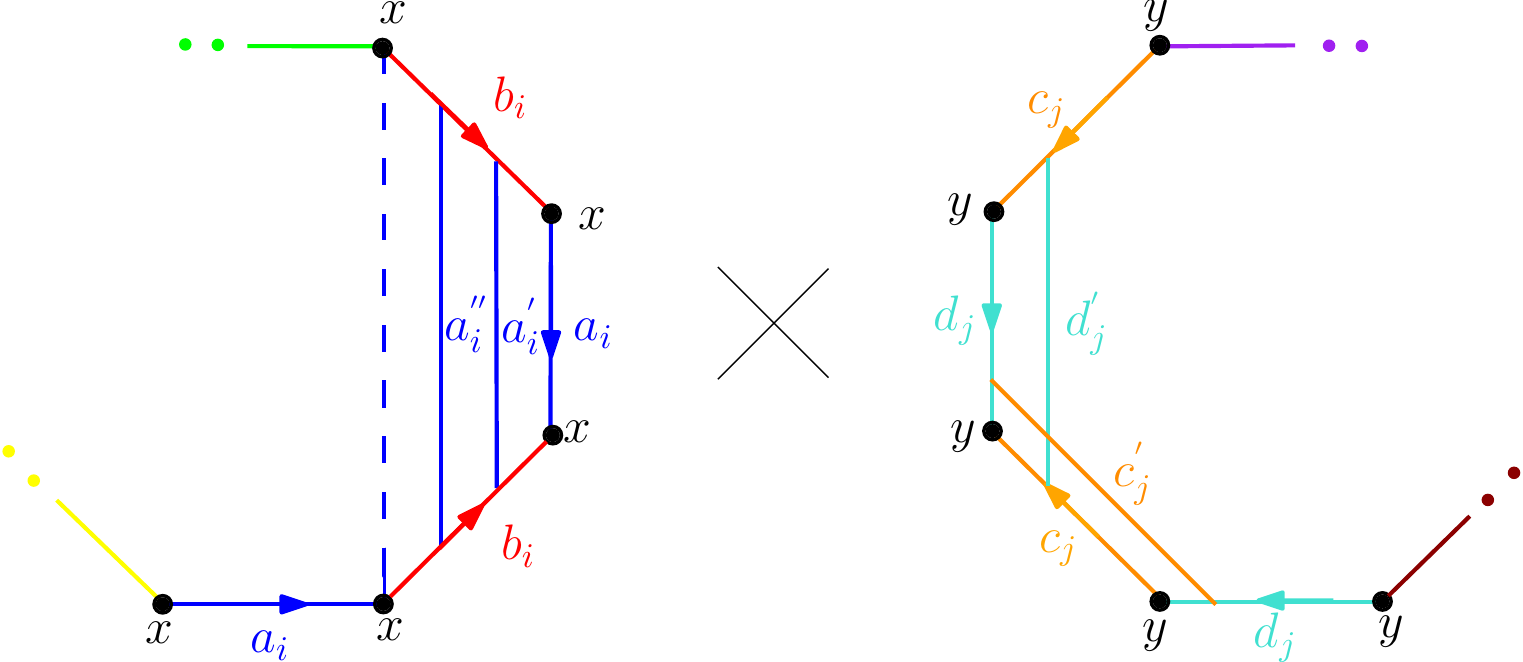}
\caption{Lagrangian tori $a_i'\times c_j'$ and $a_i''\times d_j'$}
\label{fig:lagrangian-pair}
\end{center}
\end{figure}

Since $m$-torus surgery is non-symplectic for $m\geq 2$, the manifold $Y_n(m)$ is symplectic only when $m=1$. Using the Lemma~\ref{invariants}, we see that the Euler characteristic of $Y_n(m)$ is $4n-4$ and its signature is $0$.  $\pi_1(Y_n(m))$ is generated by $a_i,b_i,c_j,d_j$ ($i=1,2$ and $j=1,\dots,n$) and the following relations hold in $\pi_1(Y_n(m))$:  
\begin{gather}\label{Luttinger relations}
[b_1^{-1},d_1^{-1}]=a_1,\ \  [a_1^{-1},d_1]=b_1,\ \  [b_2^{-1},d_2^{-1}]=a_2,\ \  [a_2^{-1},d_2]=b_2,\\ \nonumber
[d_1^{-1},b_2^{-1}]=c_1,\ \ [c_1^{-1},b_2]=d_1,\ \ [d^{-1}_2,b^{-1}_1]=c_2,\ \ [c_2^{-1},b_1]^m=d_2,\\ \nonumber
 [a_1,c_1]=1, \ \ [a_1,c_2]=1,\ \  [a_1,d_2]=1,\ \ [b_1,c_1]=1,\\ \nonumber
[a_2,c_1]=1, \ \ [a_2,c_2]=1,\ \  [a_2,d_1]=1,\ \ [b_2,c_2]=1,\\ \nonumber
[a_1,b_1][a_2,b_2]=1,\ \ \prod_{j=1}^n[c_j,d_j]=1,\\ \nonumber
[a_1^{-1},d_3^{-1}]=c_3, \ \ [a_2^{-1} ,c_3^{-1}] =d_3, \  \dots, \ 
[a_1^{-1},d_n^{-1}]=c_n, \ \ [a_2^{-1} ,c_n^{-1}] =d_n,\\ \nonumber
[b_1,c_3]=1,\ \  [b_2,d_3]=1,\ \dots, \
[b_1,c_n]=1,\ \ [b_2,d_n]=1.
\end{gather}

The surfaces $\Sigma_2\times\{{\rm pt}\}$ and $\{{\rm pt}\}\times \Sigma_n$ in $\Sigma_2\times\Sigma_n$ are not affected by the above Luttinger surgeries, 
and descend to surfaces in $Y_n(m)$. They are symplectic submanifolds in $Y_n(1)$. Let us denote these symplectic submanifolds n $Y_n(1)$ by $\Sigma_2$ and $\Sigma_n$. 
Note that $[\Sigma_2]^2=[\Sigma_n]^2=0$ and $[\Sigma_2]\cdot[\Sigma_n]=1$. Let $\mu(\Sigma_2)$ and $\mu(\Sigma_n)$ denote the meridians of these surfaces in $Y_n(m)$. The above construction easily generalizes to $\Sigma_3\times\Sigma_n$. We will denote the resulting smooth manifold in this case as $Z_n(m)$.

Next, we consider a slightly different construction. Let us fix integers $n \geq 2$, and $m \geq 1$. Let $Y_{n}(1,m)$ denote smooth $4$-manifold obtained by performing the following $2n$ torus surgeries on $\Sigma_n\times \mathbb{T}^2$:
\begin{eqnarray}\label{eq: Luttinger surgeries for Y_1(m)} 
&&(a_1' \times c', a_1', -1), \ \ (b_1' \times c'', b_1', -1),\\  \nonumber
&&(a_2' \times c', a_2', -1), \ \ (b_2' \times c'', b_2', -1),\\  \nonumber
&& \cdots, \ \ \cdots \\ \nonumber
&&(a_{n-1}' \times c', a_{n-1}', -1), \ \ (b_{n-1}' \times c'', b_{n-1}', -1),\\  \nonumber
&&(a_{n}' \times c', c', +1), \ \ (a_{n}'' \times d', d', +m).
\end{eqnarray}

Let $a_i,b_i$ ($i=1,2, \cdots, n$) and $c,d$\/ denote the standard generators of $\pi_1(\Sigma_{n})$ and $\pi_1(\mathbb{T}^2)$, respectively. Since all the torus surgeries listed above are Luttinger surgeries when $m = 1$ and the Luttinger surgery preserves minimality, $Y_{n}(1/p,1/q)$ is a minimal symplectic $4$-manifold. The fundamental group of $Y_{n}(1/p,m/q)$ is generated by $a_i,b_i$ ($i=1,2,3 \cdots, n$) and $c,d$, and the Lemma~\ref{invariants} implies that the following relations hold in $\pi_1(Y_{n}(1,m))$:

\begin{gather}\label{Luttinger relations for Y_1(m)}
[b_1^{-1},d^{-1}]=a_1,\ \  [a_1^{-1},d]=b_1,\ \
[b_2^{-1},d^{-1}]=a_2,\ \  [a_2^{-1},d]=b_2,\\ \nonumber
\cdots,  \ \  \cdots,  \\ \nonumber
[b_{n-1}^{-1},d^{-1}]=a_{n-1},\ \  [a_{n-1}^{-1},d]=b_{n-1},\ \
[d^{-1},b_{n}^{-1}]=c,\ \ {[c^{-1},b_{n}]}^{-m}=d,\\ \nonumber
[a_1,c]=1,\ \  [b_1,c]=1,\ \ [a_2,c]=1,\ \  [b_2,c]=1,\\ \nonumber
[a_3,c]=1,\ \  [b_3,c]=1,\\ \nonumber
\cdots,  \ \  \cdots,  \\ \nonumber
[a_{n-1},c]=1,\ \  [b_{n-1},c]=1,\\ \nonumber
[a_{n},c]=1,\ \  [a_{n},d]=1,\\ \nonumber
[a_1,b_1][a_2,b_2] \cdots [a_n,b_n]=1,\ \ [c,d]=1.
\end{gather}

Let us denote by $\Sigma'_n \subset Y_{n}(1,m)$ a genus $n$ surface that desend from the surface $\Sigma_{n}\times\{{\rm pt}\}$ in $\Sigma_{n}\times \mathbb{T}^2$. 

\section{Construction of a smooth complex algebraic surface with $K^2 = 144$ and $\chi_h = 16$}\label{sec:ball quotients}

In this section, we construct a smooth complex algebraic surface with invariants $K^2 = 144$ and $\chi_h = 16$. This complex surface of general type is on the BMY line $c_1^2 = 9\chi_h$, and thus is a ball quotient. It is obtained as an abelian covering of the complex projective plane branched over an arrangement of $12$ lines shown as in Figure~\ref{fig:H}, known in the literature as the Hesse configuration. Such complex surfaces with bigger invariants, $K^2$ and $\chi_h$, was initially studied by by Friedrich Hirzebruch (for example, see \cite{Hirze}, page 134). Our construction is motivated and similar in spirit to that of Bauer-Catanese in \cite{Main}, where the complex ball quotients obtained from a complete quadrangle arrangement in $\mathbb{CP}^2$.

In $\mathbb{CP}^2$, let us consider the Hesse arrangement $H$, which is a configuration of $9$ points $p_i$ ($1\leq i \leq 9$) and $12$ lines $l_j$ ($1 \leq j \leq 12$), such that each line passes through $3$ of the points $p_i$ and each point lies at the intersection of $4$ of the lines $l_j$ (see Figure~\ref{fig:H}). We blow up $\mathbb{CP}^2$ at the points $p_1, \cdots, p_9$, and denote the blow up map by $\pi: T:=\widehat{\mathbb{CP}^2} \rightarrow \mathbb{CP}^2$. Let $E_i$ be the exceptional divisor corresponding to the blow up at the point $p_i$ for $i=1,\cdots, 9$. In the sequel, we will slightly abuse our notation and denote the proper transform of a line $l_j$ using the same symbol, or $\tilde l_j$ when distinction needed. 

Let us now take the formal sum of the proper transforms $l_j$ of the $12$ lines of the arrangement and the $9$ exceptional divisors $E_i$'s, and denote it by $D$. The divisor $D$ in $T$ has only simple normal crossings. The homology classes of simple closed loops around the $l_j$'s and the $E_i$'s generate $H_1(T-D, \mathbb{Z})$. Let us denote a loop encircling a line $E_i$ or $l_j$ by using the same letter. Then for each $i=1,\cdots,9$, the class of $E_i$ can be written as a sum of the homology classes of $4$ loops around the $4$ lines intersecting $E_i$. To illustrate this, notice that we have $E_1 = l_1 + l_4 + l_7 + l_{10}$ and similar relations hold for the other $E_i$'s. Moreover, the sum of the homology classes of $12$ loops $l_j$'s are $0$, which shows that $H_1(T-D, \mathbb{Z})$ is a free group of rank $11$.

\begin{figure}[ht]
\begin{center}
\includegraphics[scale=.65]{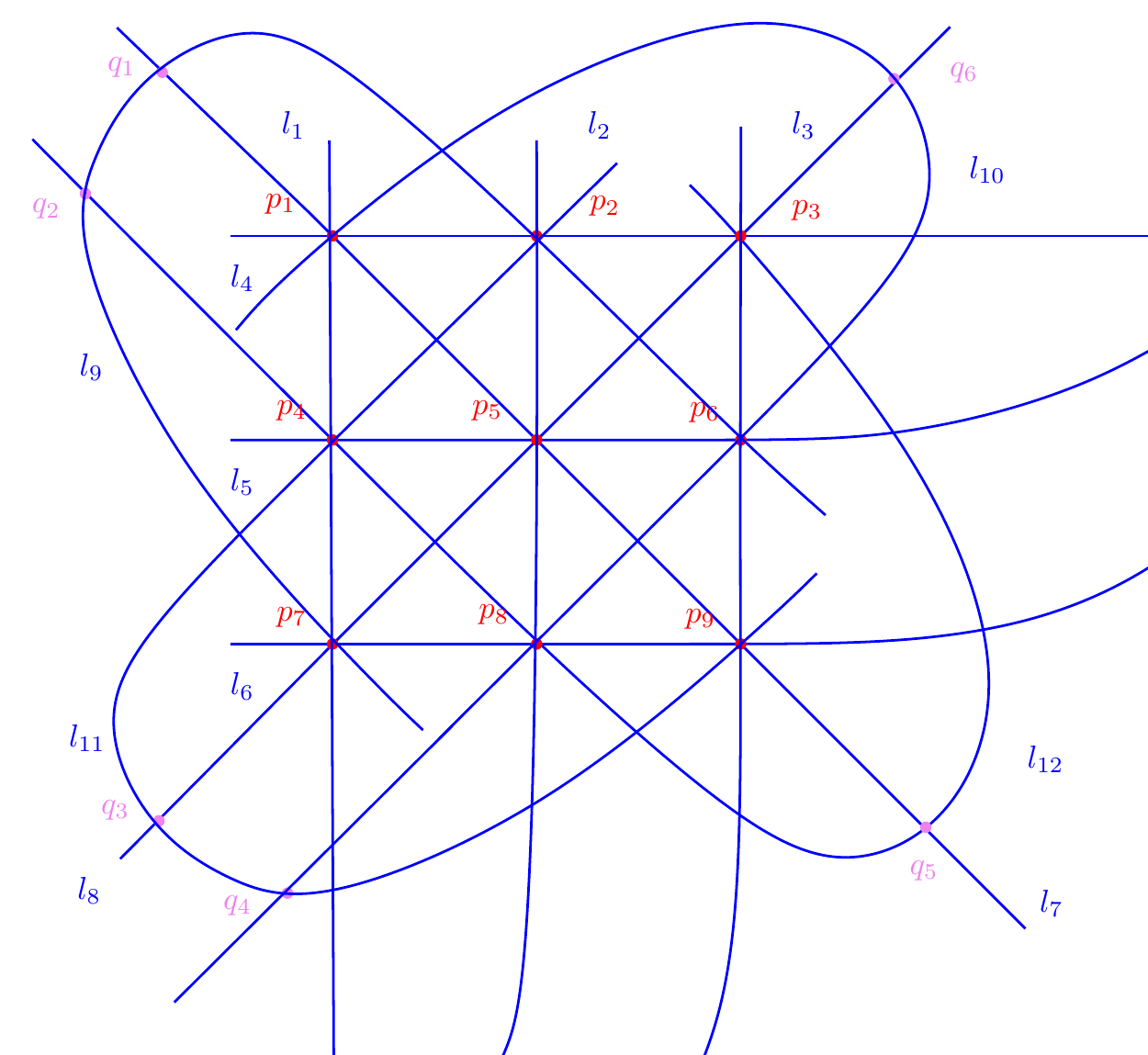}
\caption{Hesse arrangement in $\mathbb{CP}^2$}
\label{fig:H}
\end{center}
\end{figure}

 It is known that a surjective homomorphism $\varphi: \mathbb{Z}^{11} \simeq H_1(T-D,\mathbb{Z}) \rightarrow (\mathbb{Z}/3\mathbb{Z})^2$ determines an abelian $(\mathbb{Z}/3\mathbb{Z})^2$-cover $p:W \rightarrow T=\widehat{\mathbb{CP}^2}$. We need that $p$ is branched exactly in $D$. Notice that there are various epimorphisms satisfying this. To illustrate one, let us define $\varphi$ as follows: 
\begin{align*}
&\varphi(l_1) = \varphi(l_2) = \varphi(l_3) = (1,0),\\
&\varphi(l_4) = (2,0), \varphi(l_5) = (0,1), \varphi(l_6) = (2,2),\\
&\varphi(l_7) = (2,1), \varphi(l_8) = (1,2), \varphi(l_9) = (2,0),\\
&\varphi(l_{10}) = \varphi(l_{11}) = (1,1), \varphi(l_{12}) = (0,2). 
\end{align*}

Then
\begin{align*}
&\varphi(E_1) = \varphi(l_1+l_4+l_7+l_{10}) = (0,2),
&\varphi(E_2) = \varphi(l_2+l_4+l_9+l_{11}) = (0,1),\\
&\varphi(E_3) = \varphi(l_3+l_4+l_{12}+l_8) = (1,1),
&\varphi(E_4) = \varphi(l_1+l_5+l_{11}+l_{12}) = (2,1),\\
&\varphi(E_5) = \varphi(l_2+l_5+l_7+l_8) = (1,1),
&\varphi(E_6) = \varphi(l_3+l_5+l_9+l_{10}) = (1,2),\\
&\varphi(E_7) = \varphi(l_1+l_6+l_8+l_9) = (0,1),
&\varphi(E_8) = \varphi(l_2+l_6+l_{12}+l_{10}) = (1,2),\\
&\varphi(E_9) = \varphi(l_3+l_6+l_7+l_{11}) = (0,1) 
\end{align*}are all nonzero.
Also, $\varphi(l_1+l_2+l_3+l_{7} + l_9+l_{10}) \neq (0,0)$. These conditions ensure that $\varphi$ gives a $(\mathbb{Z}/3\mathbb{Z})^2$ Galois cover branched exactly in $D$ (see Lemma 2.3, part 1 in \cite{Main}, also  \cite{Ku}).

We also note that the following are linearly independent:

\begin{center}
$\varphi(E_1)$ and $\varphi(l_i)$, $i=1,4,7,10$;
$\varphi(E_2)$ and $\varphi(l_i)$, $i=2,4,9,11$; \\
$\varphi(E_3)$ and $\varphi(l_i)$, $i=3,4,12,8$;
$\varphi(E_4)$ and $\varphi(l_i)$, $i=1,5,11,12$;\\ 
$\varphi(E_5)$ and $\varphi(l_i)$, $i=2,5,7,8$;
$\varphi(E_6)$ and $\varphi(l_i)$, $i=3,5,9,10$;\\
$\varphi(E_7)$ and $\varphi(l_i)$, $i=1,6,8,9$;
$\varphi(E_8)$ and $\varphi(l_i)$, $i=2,6,12,10$;\\ 
$\varphi(E_9)$ and $\varphi(l_i)$, $i=3,6,7,11$. 
\end{center}
Moreover, $D$ has simple normal crossings, we deduce that the total space $W$ is smooth (see Lemma 1.4 in \cite{Ku}).

Let us compute some invariants of the surface $W$, and verify that $c_1^2(W)=K_W^2=144$ and $\chi_h(W) = 16$. Let $H$ be the divisor class corresponding to the invertible sheaf $\mathcal{O}(1)$ on $\mathbb{CP}^2$. The canonical sheaf $w_{\mathbb{CP}^2}$ of $\mathbb{CP}^2$ is $\mathcal{O}(-2-1)=\mathcal{O}(-3)$ which corresponds to the canonical divisor $-3H$. Then, the canonical divisor $K_Y$ of $Y$ is $-3H + \sum\limits_{i=1}^{9}E_i$ where we denoted the pullback of $H$ by itself. By using the canonical divisor formula for abelian covers (Proposition 4.2 in \cite{Par}), we compute
\begin{eqnarray*}
K_W &=& \pi^* \Big( (-3H + \sum\limits_{i=1}^{9}E_i) + \frac{2}{3} \sum\limits_{i=1}^{9}E_i +  \frac{2}{3}(12H- 4\sum\limits_{i=1}^{9}(E_i) \Big)\\
&=& \pi^* \Big(5H - \sum\limits_{i=1}^{9}E_i \Big).
\end{eqnarray*}
Since $H \cdot E_i = 0,\, H^2=1$ and $E_i^2 = -1$, the above equality gives $K_W^2 = 9 (25-9) = 144$.

The Euler number $e(W)$ of $W$ can be found as follows.
 \begin{equation*}
e(W) = 9 e(\widehat{\mathbb{CP}^2} = \mathbb{CP}^2 \# 9\overline{\mathbb{CP}^2}) - 6\cdot 21 e(\mathbb{CP}^1)+4 \cdot 48 = 48.
\end{equation*}


Thus $c_1^2(W) = 3 c_2(W)$, and $W$ is a ball quotient. Since $12 \chi_h(W) - c_1^2(W) = e(W)$, we have $\chi_h(W) = 16$. In summary, we proved the  following theorem.

\begin{theo}
There exists a smooth complex algebraic surface $W$ with invariants $c_1^2(W)=144$ and $\chi_h(W) = 16$ constructed as $(\mathbb{Z}/3\mathbb{Z})^2$-cover of $\CP$ branched over the Hesse configuration.
\end{theo}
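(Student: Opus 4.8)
The plan is to verify the three numerical invariants $c_1^2(W)=144$, $c_2(W)=12\chi_h(W)-c_1^2(W)$ forcing $\chi_h(W)=16$, and the existence/smoothness of the cover $W$, by assembling the pieces already laid out in the excerpt. The existence of the $(\mathbb{Z}/3\mathbb{Z})^2$-cover branched exactly along $D$ follows from the explicit epimorphism $\varphi$ together with the cited branching criterion (Lemma 2.3 of \cite{Main}, also \cite{Ku}): one checks that $\varphi(l_j)\neq 0$ for all $j$, that $\varphi(E_i)\neq 0$ for all $i$, and the extra non-vanishing condition on $\varphi(l_1+l_2+l_3+l_7+l_9+l_{10})$, all of which are displayed. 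Smoothness of $W$ is handed to us by Lemma 1.4 of \cite{Ku} once we know $D$ has simple normal crossings and the listed local linear-independence conditions hold at each point $p_i$ lying on the four lines through it; these are exactly the nine pairs $(\varphi(E_i),\varphi(l_?))$ tabulated as linearly independent.

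First I would pin down the canonical divisor of $W$ via the abelian-cover formula (Proposition 4.2 of \cite{Par}). The key is to identify the branch data: each of the $12$ proper transforms $l_j$ and each of the $9$ exceptional curves $E_i$ is a branch component with ramification index $3$, so the fractional contribution is $\tfrac{2}{3}$ of the total branch divisor pulled back. On $T=\widehat{\mathbb{CP}^2}$ the line class $l_j$ is $H-\sum_{p_i\in l_j}E_i$, and summing over the twelve lines (each point $p_i$ sitting on four lines) gives $\sum_j l_j = 12H - 4\sum_i E_i$, which is precisely the combination appearing in the displayed computation. Combining $K_T = -3H+\sum_i E_i$ with $\tfrac{2}{3}\sum_i E_i + \tfrac{2}{3}(12H-4\sum_i E_i)$ collapses to $5H-\sum_i E_i$, and then $K_W^2 = 9\,(5H-\sum_i E_i)^2 = 9(25-9)=144$ using $H^2=1$, $E_i^2=-1$, $H\cdot E_i=0$.

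Next I would compute $e(W)$ by the multiplicativity/stratification method: the cover has degree $9$, so away from the branch locus Euler characteristic multiplies by $9$; over each branch curve the fiber degenerates, contributing a correction, and over the normal-crossing points a further correction. Organizing $T\setminus D$, the open branch curves, and the crossing points into strata and applying $e(W)=\sum 9\,e(\text{generic stratum})+\text{corrections}$ yields the value $48$ recorded in the excerpt. From $c_1^2(W)=144$ and $c_2(W)=e(W)=48$ one sees $c_1^2=3c_2$, i.e.\ $W$ sits on the BMY line, so by Yau's uniformization theorem $W$ is a ball quotient; finally Noether's formula $12\chi_h=c_1^2+c_2=144+48=192$ gives $\chi_h(W)=16$.

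The main obstacle is the Euler-number computation, not the $K^2$ computation. The bookkeeping for $e(W)$ requires correctly counting the local monodromy over each of the $9+12$ branch curves and especially over the many normal-crossing intersection points, where two branch components meet and the fiber structure is governed by the pair of local characters $(\varphi(E_i),\varphi(l_j))$; the stated linear independence of these pairs is exactly what makes the local analysis uniform. I would double-check the coefficients $6\cdot 21$ and $4\cdot 48$ in the displayed formula against a clean stratified Euler-characteristic count (or alternatively derive $c_2$ purely from $c_1^2$ and Noether's formula, sidestepping the direct computation), since a sign or multiplicity error there is the most likely source of trouble. Everything else reduces to verifying finitely many explicit conditions on $\varphi$, which are already tabulated.
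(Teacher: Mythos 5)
Your proposal follows essentially the same route as the paper: existence and smoothness of the cover via the displayed character conditions and the criteria of Bauer--Catanese and Kulikov, $K_W=\pi^*(5H-\sum_i E_i)$ from Pardini's formula giving $K_W^2=9(25-9)=144$, the stratified Euler-characteristic count giving $e(W)=48$, and Noether's formula for $\chi_h=16$. The coefficients you flagged do check out ($21$ branch components, $48$ normal-crossing points, namely $9\cdot 4$ crossings of exceptional curves with lines plus the $12$ residual double points of the Hesse arrangement), so $9\cdot 12-6\cdot 42+4\cdot 48=48$ as claimed.
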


Now we consider the map $p \circ \pi : W \rightarrow \mathbb{CP}^2$, where $\pi$ is the blow up map, $p$ is the abelian cover. Let us take $p_1$, one of the blown up points in $\mathbb{CP}^2$ which is the intersection point of $l_1,l_4,l_7,l_{10}$ (see Figure~\ref{fig:H}). The pencil of lines in $\mathbb{CP}^2$ passing through $p_1$ lifts to a fibration on $W$. To determine the genus of the generic fiber of this fibration, we take a line $K$ passing through $p_1$ such that its only intersection with the lines $l_1,l_4,l_7,l_{10}$ is $p_1$. In addition, $K$ intersects the remaining $8$ lines of the arrangement. These $8$ intersection points and the point $p_1$ are $9$ branch points on $K$. The preimage of $K-E_{1}$ in $W$, which is the generic fiber of the given fibration, is a degree $3$ cover of $K-E_{1}$ (cf. \cite{BHPV}, p.241), branched at $9$ points. For the determination of the genus $g$ of the surface above $K-E_{1}$, we apply the Riemann-Hurwitz ramification formula
\begin{equation}
2g-2 = 3 (-2) + 9 \cdot 2  \Rightarrow g=7.
\end{equation} 
Therefore, generic fibers are of genus $7$ surfaces. Moreover, there are at least $9$ distinct fibrations in $W$ coming from the points $p_i$'s.

Let us consider the $12$ lines $l_j$ of the Hesse arrangement and determine their inverse images in $W$ under $p \circ \pi$. We observe that on each $l_j$, $j=1, \cdots, 12$, there are $5$ branch points. By the Riemann-Hurwitz formula, we have
\begin{equation}
2g-2 = 3 (-2) + 5 \cdot 2  \Rightarrow g=3.
\end{equation} 
Therefore, they lift to genus $3$ curves. To find their self-intersections, we apply the adjunction formula. Firstly, we note that each $l_j$ is blown up at three points, say $p_k,p_l,p_m$. For its proper transform $\tilde l_j$ in $\widehat{\mathbb{CP}^2}$, we have
\begin{equation}
[\tilde l_j] = H-E_k-E_l-E_m.
\end{equation} 
Thus, 
\begin{eqnarray*}
 K_W \cdot [\Sigma_3] &=& \pi^* \Big((5H - \sum\limits_{i=1}^{9}E_i) \cdot (H-E_k-E_l-E_m) \Big)\\
 &=&3(5-1-1-1) = 6.
\end{eqnarray*}
Using the adjunction formula $2g-2 = 4 = K_W \cdot [\Sigma_3]  + [\Sigma_3]^2$, we have $[\Sigma_3]^2 = -2$. On the other hand, on each exceptional sphere $E_i$, there are $4$ branch points. Thus, their preimages are genus $2$ curves in $W$:
\begin{equation}
2g-2 = 3 (-2) + 4 \cdot 2  \Rightarrow g=2.
\end{equation} 
Similarly as above, 
\begin{equation*}
K_W \cdot [\Sigma_2] = \pi^* \Big((5H - \sum\limits_{i=1}^{9}E_i) \cdot (E_i) \Big)=3
\end{equation*}
and by the adjunction formula we have $2g-2 = 2 = K_W \cdot [\Sigma_2]  + [\Sigma_2]^2$; which shows that $[\Sigma_2]^2 = -1$.

Let us reconsider the pencil of lines in $\mathbb{CP}^2$ passing through $p_1$ and take the line $l_1$. The preimage of its proper transform $\tilde l_1$ is a genus three surface $\Sigma_3$ with self-intersection $-2$ in $W$. The exceptional divisors $E_1$, $E_4$ and $E_7$ intersecting $\tilde l_1$ lift to genus $2$ curves with self-intersections $-1$, each of which intersects $\Sigma_3$ transversally once. Notice that the lift of $E_1$ gives rise to a section, and the union of lifts of the exceptional divisors $E_4$, $E_7$, and the proper transform of intersecting $\tilde l_1$ corresponds to a singular fiber of the given fibration. We symplectically resolve their three transversal intersection points and obtain genus $9$ symplectic submanifold of $W$ with self intersection $+1$. As in Section 2.3 of \cite{AS}, we have the following proposition.

\begin{prop}
\label{genus9}
$W \# \CPb$ contains an embedded symplectic genus $9$ curve $\Sigma_9$ with self intersection $0$.  Furthermore, there is a surjection $f_{*}: \pi_{1}(\Sigma_9) \rightarrow \pi_{1}(W \# \CPb)$. 
\end{prop}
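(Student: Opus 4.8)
The plan is to split the statement into its two assertions — producing the self-intersection $0$ curve, and establishing the $\pi_1$-surjection — and to dispose of the homological/symplectic part first, since it is routine, reducing everything to a single surjectivity claim inside $W$ itself.

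For the self-intersection I would begin from the embedded symplectic genus $9$ surface $\Sigma_9 \subset W$ of self-intersection $+1$ constructed just above the proposition, and perform one symplectic blow-up at a smooth point $x \in \Sigma_9$. Writing $e$ for the resulting exceptional sphere, the proper transform $\widehat\Sigma_9$, whose class is $[\Sigma_9] - e$, is again symplectic, is abstractly diffeomorphic to $\Sigma_9$ (so it is still of genus $9$) because $x$ is a smooth point, and has $[\widehat\Sigma_9]^2 = [\Sigma_9]^2 - 1 = 0$. Since blowing up does not change the fundamental group, $\pi_1(W \# \CPb) = \pi_1(W)$ and $\widehat\Sigma_9 \cong \Sigma_9$ carries the same loops; hence it suffices to prove that the inclusion induces a surjection $\pi_1(\Sigma_9) \twoheadrightarrow \pi_1(W)$ for the genus $9$ curve inside $W$.

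For the surjection I would exploit the pencil used in the construction. Recall that $\Sigma_9$ is the symplectic resolution of $\Sigma_3 \cup \widehat E_1 \cup \widehat E_4 \cup \widehat E_7$, where $\Sigma_3$ is the lift of $\tilde l_1$ and $\widehat E_i$ is the lift of $E_i$; the subconfiguration $\Sigma_3 \cup \widehat E_4 \cup \widehat E_7$ is the (resolved) fiber over $[l_1]$ of the genus $7$ fibration $h : W \to B$ coming from the pencil of lines through $p_1$, while $\widehat E_1$ is a section of $h$. The topological input is the homotopy exact sequence of $h$, which gives that $\pi_1(W)$ is generated by the image of $\pi_1(F)$ of a regular fiber $F$ together with the image of the section $\widehat E_1$ (the section surjects onto $\pi_1(B)$, and $\pi_1(F)$ accounts for the kernel of $\pi_1(W)\to\pi_1(B)$). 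Since $\widehat E_1 \subset \Sigma_9$, it remains to see that the loops of a regular fiber lie in $i_*\pi_1(\Sigma_9)$: a regular fiber is homotopic in $W$ into a small neighborhood of the singular fiber $\Sigma_3 \cup \widehat E_4 \cup \widehat E_7 \subset \Sigma_9$, and the tubes introduced by the symplectic resolution realize exactly the curves of $F$, the collapsed vanishing cycles being already nullhomotopic in $\pi_1(W)$. Combining these yields $i_*\pi_1(\Sigma_9) = \pi_1(W)$ and hence $f_*$; this mirrors the scheme of Section 2.3 of \cite{AS}.

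The main obstacle is the analysis of the $(\mathbb{Z}/3\mathbb{Z})^2$-cover $p$ restricted to the pencil through $p_1$, which underlies every step above. Concretely, I would need to track $\varphi$ on the lines through $p_1$ to compute the restricted monodromy, confirm that the fibers of $h$ are connected of genus $7$ — passing to the Stein factorization if the naive preimage of a line decomposes into several components, and identifying the base curve $B$ — and check that $\widehat E_1$ meets the fibers so as to give a section surjecting onto $\pi_1(B)$. The delicate point is precisely whether $B$ is rational, in which case the regular fiber alone already surjects, or has positive genus, in which case one genuinely needs $\widehat E_1$ to capture $\pi_1(B)$; settling this, together with the homotopy of fiber loops into the singular fiber, is where the real work lies. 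Everything else — the blow-up, the genus and self-intersection bookkeeping, and the invariance of $\pi_1$ under blow-up — is routine.
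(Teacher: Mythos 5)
Your proposal matches the paper's own argument: the paper likewise obtains the square-$0$ genus-$9$ curve by blowing up once the square-$(+1)$ surface built by resolving the lift of $E_1$ (the section) with the singular fiber over $[l_1]$, and for the surjection it simply invokes the section-plus-fiber generation of $\pi_1$ by citing Section 2.3 of \cite{AS}. The items you flag as the remaining ``real work'' (connectedness and genus of the generic fiber, the Stein factorization and base curve, and the lift of $E_1$ genuinely being a section) are precisely the assertions the paper states without proof in the paragraph preceding the proposition, so your plan is essentially the paper's proof, spelled out more cautiously.
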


\section{Constructions of Symplectic 4-Manifolds with Positive Signatures from Hirzebruch's line-arrangement surface}\label{sec:positive}

In what follows, we will construct families of simply connected, minimal, symplectic and smooth 4-manifolds with positive signatures, by making use of the complex surface $W \# \CPb$, which we constructed above using the Hesse configuration. By Proposition \ref{genus9}, we know that $W \# \CPb$ contains an embedded symplectic genus $9$ curve of self-intersection zero. We endow $W\#\CPb$ with the symplectic structure induced from the  K\"{a}hler structure. It will be the first building block in our construction, which has the following invariants: $e(W\#\CPb)= 49$, $\sigma(W\#\CPb) = 15$, $c_1^2(W\#\CPb)= 143$ and $\chi_h(W\#\CPb)= 16$. Our second piece will be a minimal, simply connected and symplectic $4$-manifold $X_{g,g+2}$ (\cite{AS}, Theorem 3.12 and Section 5):

\begin{theo}\label{thm2} 
For any integer $g \geq 1$, there exist a minimal symplectic 4-manifold $X_{g,g+2}$ obtained via Luttinger surgery such that
\begin{itemize}
\item[(i)] $X_{g,g+2}$ is simply connected
\item[(ii)] $e(X_{g,g+2})= 4g+2$, $\sigma (X_{g,g+2}) = - 2$, $c_1^{2}(X_{g,g+2}) = 8g-2$, and\/ $\chi(X_{g,g+2}) = g$.
\item[(iii)] $X_{g,g+2}$ contains the symplectic surface $\Sigma$ of genus $2$ with self-intersection $0$ and two genus $g$ surfaces with self-intersection $-1$ intersecting $\Sigma$ positively and transversally.  
\end{itemize}
\end{theo}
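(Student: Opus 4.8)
The plan is to realize $X_{g,g+2}$ as the output of a sequence of Luttinger surgeries performed on a minimal symplectic building block $B_g$ whose numerical invariants already equal those in (ii). Since Luttinger surgery changes neither the Euler characteristic nor the signature (Lemma~\ref{invariants}), the values $e = 4g+2$ and $\sigma = -2$ (and hence $c_1^2 = 2e+3\sigma = 8g-2$, $\chi_h = (e+\sigma)/4 = g$) must be built into $B_g$ from the start. The surfaces listed in (iii) strongly suggest the structural model: $B_g$ should carry a genus-$2$ symplectic fibration over a genus-$g$ base, in which $\Sigma$ is a regular fiber (genus $2$, square $0$) and the two genus-$g$ surfaces are two disjoint sections, each meeting every fiber once transversally and arranged to have self-intersection $-1$. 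An Euler-characteristic count for a genus-$2$ Lefschetz fibration, $e = e(\Sigma_g)\,e(\Sigma_2) + \#\{\text{singular fibers}\} = (4g-4) + 6$, confirms that the prescribed invariants are attainable. In practice I would assemble $B_g$ so that a product region $\Sigma_2 \times \Sigma_g$ is available to host Lagrangian tori, while a symplectic-sum summand installs the signature defect $\sigma=-2$ together with the two square-$(-1)$ sections; I would record the standard generators $a_i,b_i$ of the fiber $\pi_1(\Sigma_2)$ and $c_j,d_j$ of the base $\pi_1(\Sigma_g)$.

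First I would lay out a collection of Lagrangian tori of the type used in Section~\ref{sec:Luttinger}, namely products of a loop in the base direction with a loop in the fiber direction, chosen to carry all the generators of $\pi_1(B_g)$ and, crucially, to lie in the complement of $\Sigma$ and of the two sections. Performing $(\,\cdot\,,\,\cdot\,,\pm 1)$ Luttinger surgeries along these tori keeps the manifold symplectic and, because each is a genuine Luttinger surgery on a minimal symplectic manifold, preserves minimality (as recalled in Section~\ref{sec:Luttinger}); by Lemma~\ref{invariants} it also preserves $e$ and $\sigma$. Granting that $B_g$ is minimal, this already delivers (ii) together with the symplecticity and minimality assertions.

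The heart of the argument is (i): verifying that $\pi_1(X_{g,g+2}) = 1$. Here I would compute $\pi_1$ by van Kampen, reading off from each surgery the relation $\mu_\Lambda {\lambda'}^{n} = 1$ of Lemma~\ref{invariants} and substituting it into a presentation of $\pi_1(B_g)$. The goal is to choose the surgery curves and their framings so that the resulting commutator relations (of the shape $[x^{-1},y^{-1}] = z$ appearing in \eqref{Luttinger relations}) successively express every base generator $c_j,d_j$ and every fiber generator $a_i,b_i$ as a product of commutators of the others, collapsing the group to the trivial group once the surface relations $\prod_i[a_i,b_i]=1$ and $\prod_j[c_j,d_j]=1$ are imposed. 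This bookkeeping — arranging that the chosen tori and coefficients kill all generators simultaneously while remaining disjoint from the three surfaces of (iii) — is the main obstacle, and it is precisely the step that must be carried out by explicit computation, as in \cite{AS}.

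Finally I would confirm (iii). Because $\Sigma$ and the two sections were arranged to avoid every surgery torus, they are untouched by the surgeries and descend to symplectic submanifolds of $X_{g,g+2}$ with unchanged genera ($2$, $g$, $g$), unchanged self-intersections ($0$, $-1$, $-1$), and unchanged intersection pattern, each section meeting $\Sigma$ once positively and transversally. Combined with (i) and (ii), this completes the construction.
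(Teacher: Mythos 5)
Your outline matches the paper's treatment of this statement: the paper does not prove Theorem~\ref{thm2} but imports it from \cite{AP2, AS}, and its own review of the construction (the description of $X_{7,9}$ preceding the symplectic sum with $W\#\CPb$) is exactly your plan --- symplectically sum $\mathbb{T}^4\#2\CPb$, which carries a square-$0$ genus-$2$ surface met transversally by the two exceptional spheres and contributes $\sigma=-2$, with $\Sigma_2\times\Sigma_g$ along the genus-$2$ surfaces (the two genus-$g$ square-$(-1)$ surfaces arising as internal sums of punctured exceptional spheres with punctured copies of $\{pt\}\times\Sigma_g$), and then kill $\pi_1$ by $\pm1$ Luttinger surgeries on Lagrangian tori disjoint from these three surfaces, with minimality from Usher's theorem and invariance of $e$ and $\sigma$ from Lemma~\ref{invariants}. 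Like the paper, you defer the decisive explicit choice of tori and the resulting $\pi_1$ computation to \cite{AS}, so this is a correct account of the same argument rather than an independent proof.
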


\subsection{Symplectic and smooth 4-manifolds with signatures equal to 12}

We now present our first construction of exotic symplectic $4$-manifolds with $\sigma = 12$. Our first building block is the complex surface $W\#\CPb$ containing genus $9$ symplectic surface with self-intersection $0$. The second building block is obtained from the symplectic $4$-manifold $X_{7,9}$, in the notation of Theorem~\ref{thm2}. We will use the fact that $X_{7,9}$ contains a symplectic genus two surface $\Sigma_2$ with self-intersection $0$ and two genus $7$ symplectic surfaces with self intersections $-1$ intersecting $\Sigma_2$ positively and transversally. 

Let us review the construction of $X_{7,9}$ (see \cite{AS} for the details). We take a copy of $\mathbb{T}^2 \times \{pt\}$ and $\{pt\} \times \mathbb{T}^2$ in $\mathbb{T}^2 \times \mathbb{T}^2$ equipped with the product symplectic form, and symplectically resolve the intersection point of these dual symplectic tori. The resolution produces symplectic genus two surface of self intersection $+2$ in $\mathbb{T}^2 \times \mathbb{T}^2$. By symplectically blowing up this surface twice, in $\mathbb{T}^4 \# 2 \overline{\mathbb{CP}}^{2}$, we obtain a symplectic genus 2 surface $\Sigma_2$ with self-intersection $0$, with two $-1$ spheres (i.e. the exceptional spheres resulting from the blow-ups) intersecting it positively and transversally. Next, we form the symplectic connected sum of $\mathbb{T}^4\#2\overline{\mathbb{CP}}^{2}$ with $\Sigma_2 \times \Sigma_7$ along the genus two surfaces  $\Sigma_2$ and $\Sigma_2 \times \{pt\}$. By performing the sequence of appropriate $\pm 1$ Luttinger surgeries on $(\mathbb{T}^4 \# 2 \overline{\mathbb{CP}}^{2}) \#_{\Sigma_2 = \Sigma_2 \times \{pt\}} (\Sigma_2 \times \Sigma_7)$, we obtain the symplectic $4$-manifold $X_{7,9}$ (\cite{AS}). It can be seen from the construction that, $X_{7,9}$ contains a symplectic surface $\Sigma_2$ with self intersection $0$ and two genus $7$ surfaces $S_{1}$ and $S_{2}$ with self intersections $-1$ which have positive and transverse intersections with $\Sigma_2$. Notice that the surfaces $S_{1}$ and $S_{2}$ result from the internal sum of the punctured exceptional spheres in $\mathbb{T}^4 \# 2\overline{\mathbb{CP}}^{2} \setminus \nu (\Sigma_2)$ and the punctured genus 7 surfaces in $\Sigma_2 \times \Sigma_7 \setminus \nu (\Sigma_2 \times \{pt\})$. Moreover, $X_{7,9}$ contains a pair of disjoint Lagrangian tori $T_{1}$ and $T_{2}$ of self-intersections $0$ such that $\pi_{1}(X\setminus(T_{1}\cup T_{2})) = 1$. Note that these Lagrangian tori descend from $\Sigma_2 \times \Sigma_7$, and survive in  $X_{7,9}$ after symplectic connected sum and the Luttinger surgeries. This is because there are at least two pairs of Lagrangian tori in $\Sigma_2 \times \Sigma_7$ that were away from the standard symplectic surfaces $\Sigma_2 \times \{pt\}$ and $ \{pt\} \times \Sigma_7$, and the Lagrangian tori that were used for Luttinger surgeries.  Also, the fact that $\pi_{1}(X_{7,9}\setminus(T_{1}\cup T_{2})) = 1$ is explained in details in \cite{AP3} (see proof of Theorem 8, page 272). Next, we symplectically resolve the intersection of $\Sigma_2$ and one of the genus $7$ surfaces, say $S_1$, in $X_{7,9}$. This produces the genus $9$ surface $\Sigma_9$ of square $+1$ intersecting the other genus $7$  surface $S_{2}$ with self-intersection $-1$. We blow up $\Sigma_9$ at one point (away from its intersection point with $S_{2}$). Thus we obtained a genus $9$ surface $\Sigma_9'$ of square 0 inside $X_{7,9} \# \CPb$.

Since the two symplectic building blocks $W \# \CPb$ and $X_{7,9} \# \CPb$ contain symplectic genus $9$ surfaces of self intersections zero, we can form their symplectic connected sum along these surfaces. Let
\begin{equation*}
Y = (W \# \CPb) \#_{\Sigma_9=\Sigma_9'} (X_{7,9} \# \CPb).
\end{equation*}


\begin{lemm}
The symplectic manifold $Y$ has $e(Y) = 112$, $\sigma(Y) = 12$.
\end{lemm}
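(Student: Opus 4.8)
The plan is to reduce everything to the behavior of the two topological invariants $e$ and $\sigma$ under the symplectic connected sum, together with the already-known invariants of the two summands. First I would record the invariants of the two building blocks. For the first factor the text already states $e(W\#\CPb)=49$ and $\sigma(W\#\CPb)=15$. For the second factor I would apply Theorem~\ref{thm2} with $g=7$ to get $e(X_{7,9})=4\cdot 7+2=30$ and $\sigma(X_{7,9})=-2$, and then account for the single blow-up performed in the construction of $\Sigma_9'$: connected sum with $\CPb$ gives $e(X_{7,9}\#\CPb)=30+3-2=31$ and $\sigma(X_{7,9}\#\CPb)=-2-1=-3$.

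Next I would invoke the standard formulas for Gompf's symplectic fiber sum along a surface of genus $g$. For the Euler characteristic one has
\begin{equation*}
e(X_1\#_\Sigma X_2)=e(X_1)+e(X_2)-2\,e(\Sigma),
\end{equation*}
which follows from the inclusion–exclusion identity $e(A\cup B)=e(A)+e(B)-e(A\cap B)$ applied to $X_i=(X_i\setminus\nu\Sigma)\cup\nu\Sigma$ and to the glued manifold $Y=(X_1\setminus\nu\Sigma)\cup_\partial(X_2\setminus\nu\Sigma)$, using that the common boundary is a circle bundle over $\Sigma$ and hence has vanishing Euler characteristic. Here $\Sigma=\Sigma_9$ has genus $9$, so $e(\Sigma_9)=2-2\cdot 9=-16$, and I would conclude $e(Y)=49+31-2(-16)=80+32=112$.

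For the signature I would use Novikov additivity. Writing $Y=M_1\cup_\partial M_2$ with $M_i=X_i\setminus\mathrm{int}\,\nu(\Sigma_9)$, additivity gives $\sigma(Y)=\sigma(M_1)+\sigma(M_2)$, and the same principle applied to $X_i=M_i\cup\nu(\Sigma_9)$ yields $\sigma(X_i)=\sigma(M_i)+\sigma(\nu(\Sigma_9))$. The crucial point is that the gluing surfaces here both have self-intersection $0$ (the curve $\Sigma_9$ in $W\#\CPb$ by Proposition~\ref{genus9}, and $\Sigma_9'$ in $X_{7,9}\#\CPb$ by construction), so the normal bundles match with opposite orientation and the fiber sum is defined; moreover a disk bundle over a surface with Euler number $0$ has vanishing signature, so $\sigma(\nu(\Sigma_9))=0$ and $\sigma(M_i)=\sigma(X_i)$. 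Therefore $\sigma(Y)=\sigma(W\#\CPb)+\sigma(X_{7,9}\#\CPb)=15+(-3)=12$.

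There is no serious obstacle here: the computation is entirely routine once the invariants of the two summands are in hand. The only point that genuinely must be checked, rather than merely plugged in, is the hypothesis underlying the signature formula, namely that both $\Sigma_9$ and $\Sigma_9'$ have self-intersection $0$ so that the fiber sum is admissible and Novikov additivity applies with no correction term; both facts are already established earlier in the construction.
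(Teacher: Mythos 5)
Your proposal is correct and follows essentially the same route as the paper: the paper simply cites the stated invariants of $W\#\CPb$, reads off $e(X_{7,9})=30$, $\sigma(X_{7,9})=-2$ from Theorem~\ref{thm2}, and invokes the symplectic connected sum formula, which is exactly the computation you carry out in detail (including the correct bookkeeping for the extra $\CPb$ and the genus $9$ gluing surface). Your explicit justification of the Euler characteristic formula and of Novikov additivity merely fills in steps the paper leaves implicit.
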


\begin{proof}  We have stated the topological invariants of $W \# \CPb$ above and by Theorem~\ref{thm2}, we have $e(X_{7,9}) = 30$, $\sigma(X_{7,9}) = -2$. Applying the symplectic connected sum formula, we compute the topological invariants of $Y$ as above.
\end{proof}

Next, we proceed by following the same lines in \cite{AS}, Section 5 and the references therein. We show that $Y$ is symplectic and simply connected, using Gompf's Symplectic Connected Sum Theorem and Van Kampen's Theorem, respectively. Using Freedman's classification theorem for simply-connected $4$-manifolds, the lemma above and the fact that $W\#\CPb$\/ contains genus two surface of self-intersection $-1$ disjoint from $\Sigma_9$, we conclude that $Y$\/ is homeomorphic to $61\CP\#49\CPb$. Since $Y$ is symplectic, by Taubes's theorem, $Y$ has non-trivial Seiberg-Witten invariant. Next, using the connected sum theorem, we deduce that the Seiberg-Witten invariant of $61\CP\#49\CPb$ is trivial. Therefore, $Y$ is not diffeomorphic to $61\CP\#49\CPb$. Furthermore, $Y$ is a minimal symplectic $4$-manifold by Usher's Minimality Theorem \cite{usher}. Since symplectic minimality implies smooth minimality, $Y$ is also smoothly minimal, and thus is smoothly irreducible.

Moreover, as explained above, $Y$ contains a pair of disjoint Lagrangian tori $T_{1}$ and  $T_{2}$ of self-intersection $0$ such that $\pi_{1}(Y\setminus(T_{1}\cup T_{2})) = 1$. We can perturb the symplectic form on $Y$ in such a way that one of the tori, say $T_{1}$, becomes symplectically embedded. We perform a knot surgery, (using a knot $K$ with non-trivial Alexander polynomial) on $Y$ along $T_{1}$ to obtain irreducible 4-manifold $(Y)_K$ that is homeomorphic but not diffeomorphic to $Y$. By varying our choice of the knot $K$, we can realize infinitely many pairwise non-diffeomorphic, irreducible 4-manifolds, either symplectic or nonsymplectic. (see Theorem 3.7 in \cite{AS})

\subsection{Symplectic and smooth 4-manifolds with signatures equal to 11} 

In this section, we will construct simply connected, minimal, symplectic and smooth $4$-manifolds with signature is equal to $11$ in two different ways. The first construction gives the exotic  $59\mathbb{CP}^{2} \# 48 \overline{\mathbb{CP}}^{2}$ and the second gives the exotic $63\mathbb{CP}^{2} \# 52\overline{\mathbb{CP}}^{2}$. 

In first construction, one of our building block is again $W\#\CPb$, containing a symplectic genus $9$ surface $\Sigma_9$ of square $0$. To obtain the second symplectic building block, we form the symplectic connected sum of $\mathbb{T}^4 \# 2\overline{\mathbb{CP}}^{2}$ with $\Sigma_2 \times \Sigma_6$ along the genus two surfaces  $\Sigma_2$ and $\Sigma_2 \times \{pt\}$. Let

\begin{equation*}
X_{6,8} = (\mathbb{T}^4 \# 2\overline{\mathbb{CP}}^{2}) \#_{\Sigma_2 = \Sigma_2 \times \{pt\}} (\Sigma_2 \times \Sigma_6).
\end{equation*}

Similar to the discussion above, we see that $X_{6,8}$ contains a symplectic surface $\Sigma_2$ with self intersection $0$ and two genus $6$ surfaces with self intersections $-1$ which have positive and transverse intersections with $\Sigma_2$. Furthermore, $X_{6,8}$ contains a symplectic genus $7$ surface $\Sigma_7$ of square $0$ resulting from the internal sum of a punctured genus one surface in $\mathbb{T}^4 \# 2\overline{\mathbb{CP}}^{2} \setminus \nu (\Sigma_2)$ and a punctured genus 6 surface $\Sigma_6$ in $ \Sigma_2 \times \Sigma_6 \setminus \nu (\Sigma_2 \times \{pt\})$. In addition, $\Sigma_7$ intersects  $\Sigma_2$ positively and transversely once. We symplectically resolve this intersection and get symplectic genus 9 surface of self intersection $+2$. We blow it up at two points and hence we obtain symplectic genus 9 surface $\Sigma_9''$ of square 0 inside $X_{6,8} \#2 \CPb$.

Next, we form the symplectic connected sum of $W \#\CPb$ and $X_{6,8} \# 2 \CPb$ along the symplectic genus 9 surfaces $\Sigma_9$ and $\Sigma_9''$ of squares zero. Let

\begin{equation*}   
V= (W \#\CPb) \#_{\Sigma_9 = \Sigma_9''} (X_{6,8} \# 2\CPb).
\end{equation*} 
The invariants of $X_{6,8} \# 2 \CPb$ are as follows. $e(X_{6,8} \#2 \CPb) = 28$, $\sigma(X_{6,8} \#2 \CPb) = -4$, thus we have 

\begin{lemm}
The symplectic manifold $V$ has $e(V) = 109$, $\sigma(V) = 11$.
\end{lemm}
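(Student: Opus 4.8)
The plan is to compute the two characteristic numbers of $V$ by additivity under the symplectic connected sum, exactly as in the two lemmas already carried out for $Y$ in the signature $12$ case. The relevant building blocks are $W\#\CPb$, whose invariants $e(W\#\CPb)=49$ and $\sigma(W\#\CPb)=15$ have been recorded earlier, and $X_{6,8}\#2\CPb$, whose invariants are stated immediately before the lemma as $e(X_{6,8}\#2\CPb)=28$ and $\sigma(X_{6,8}\#2\CPb)=-4$. I would first verify these latter numbers from Theorem~\ref{thm2} (or from the explicit description of $X_{6,8}$): since $X_{6,8}=(\mathbb{T}^4\#2\CPb)\#_{\Sigma_2}(\Sigma_2\times\Sigma_6)$, one gets $e(X_{6,8})$ and $\sigma(X_{6,8})$ from the connected-sum-along-a-surface formulas, and then blowing up twice adds $2$ to $e$ and subtracts $2$ from $\sigma$, yielding the displayed values.

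Next I would apply the gluing formulas for a symplectic connected sum of two $4$-manifolds $A$ and $B$ along surfaces of genus $g$. Recall that if $Z=A\#_{\Sigma}B$ with $\Sigma$ of genus $g$, then
\begin{equation*}
e(Z)=e(A)+e(B)-2\,e(\Sigma)=e(A)+e(B)+2(2g-2),
\end{equation*}
and signature is additive,
\begin{equation*}
\sigma(Z)=\sigma(A)+\sigma(B).
\end{equation*}
Here the gluing surfaces $\Sigma_9$ and $\Sigma_9''$ have genus $g=9$, so $e(\Sigma)=2-2g=-16$. Substituting, I would compute
\begin{equation*}
e(V)=49+28-2(-16)=49+28+32=109,
\end{equation*}
and
\begin{equation*}
\sigma(V)=15+(-4)=11,
\end{equation*}
which are precisely the asserted values.

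The only point requiring genuine care, rather than routine arithmetic, is making sure the Euler-characteristic formula is applied with the correct genus and sign convention: the symplectic connected sum removes a tubular neighborhood of each copy of $\Sigma_9$ and glues along the boundary circle bundle, so the surface contribution enters as $-2e(\Sigma_9)=+32$ rather than with the opposite sign. Since both building blocks are genuinely symplectic and the gluing surfaces have equal genus $9$ and opposite self-intersection normal bundles (both square $0$, as established in the construction of $\Sigma_9$ and $\Sigma_9''$), Gompf's theorem guarantees $V$ is symplectic and the formulas above apply verbatim; no further input is needed. Thus the proof is a direct substitution into the gluing formulas, mirroring the earlier $\sigma=12$ lemma, and I would write it in two or three lines.
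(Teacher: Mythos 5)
Your computation is correct and matches the paper's (implicit) argument exactly: the paper likewise records $e(X_{6,8}\#2\CPb)=28$, $\sigma(X_{6,8}\#2\CPb)=-4$ and then applies additivity of signature and the Euler characteristic gluing formula for the fiber sum along the genus $9$ surfaces, giving $\sigma(V)=15-4=11$ and $e(V)=49+28+2(2\cdot 9-2)=109$. No gaps; your extra verification of the invariants of $X_{6,8}\#2\CPb$ from Theorem~\ref{thm2} is consistent with the paper.
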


We conclude as above that $V$ is symplectic, simply connected and an exotic copy of $59\mathbb{CP}^{2} \# 48 \overline{\mathbb{CP}}^{2}$ which is also smoothly minimal. As in the previous case, by performing knot surgery, we obtain infinitely many pairwise non-diffeomorphic, irreducible, symplectic and non-symplectic 4-manifolds. 

\vspace{0.1in}

Let us now build another simply connected, minimal, symplectic and smooth 4-manifolds with signature $11$, but with different $\chi$. Our first building block is $Y= (W \# \CPb) \#_{\Sigma_9=\Sigma_9'} (X_{7,9} \# \CPb)$ constructed above. We note that $Y$ contains a genus $2$ surface of self intersection $-1$ coming from $W$, which is not affected by the symplectic connected sum operation in the construction of $Y$. For instance, one can consider $E_6$ in $\widehat{\mathbb{CP}^2}$, which is the exceptional sphere coming from the blow-up of the point $p_6$ in the Hesse arrangement. We have shown that the exceptional spheres lift to genus $2$ surfaces of self intersections $-1$ in $W$. Take one of the preimages of $E_6$  inside $W$. It is a symplectic genus $2$ surface of square $-1$ embedded in $W$, and descends to $Y$ after the symplectic connected sum. Let us denote it by $\Sigma_2'$. On the other hand, we take copies of $\mathbb{T}^2 \times \{pt\}$ and $\{pt\} \times \mathbb{T}^2$ in $\mathbb{T}^2 \times \mathbb{T}^2$ equipped with the product symplectic form, and symplectically resolve the intersection point of these dual symplectic tori. The resolution produces symplectic genus two surface of self intersection $+2$ in $\mathbb{T}^2 \times \mathbb{T}^2$. By symplectically blowing up this surface, we obtain a symplectic genus $2$ surface $\Sigma_2  \subset \mathbb{T}^4 \#  \overline{\mathbb{CP}}^{2}$ of self-intersection $+1$, with the $-1$ exceptional sphere intersecting it positively and transversally. Next, we form the symplectic connected sum of $Y$ with $\mathbb{T}^4\#\overline{\mathbb{CP}}^{2}$ along the genus two surfaces  $\Sigma_2'$ and $\Sigma_2$. Let

\begin{equation*}   
L= Y \#_{\Sigma_2' = \Sigma_2'} (\mathbb{T}^4\#\CPb).
\end{equation*} 
The invariants of $\mathbb{T}^4\#\CPb$ are as follows. $e(\mathbb{T}^4\#\CPb) = 1$, $\sigma(\mathbb{T}^4\#\CPb) = -1$, thus we have 

\begin{lemm}
$e(L) = 117$, $\sigma(L) = 11$.
\end{lemm}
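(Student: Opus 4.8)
The plan is to apply Gompf's additivity formulas for the Euler characteristic and signature of a symplectic connected sum (fiber sum). Recall that $L = Y \#_{\Sigma_2' = \Sigma_2} (\mathbb{T}^4 \# \CPb)$ is formed by gluing $Y$ and $\mathbb{T}^4 \# \CPb$ along the genus $2$ symplectic surfaces $\Sigma_2' \subset Y$ and $\Sigma_2 \subset \mathbb{T}^4 \# \CPb$. The surface $\Sigma_2'$, a lift of the exceptional divisor $E_6$, has self-intersection $-1$, while $\Sigma_2$ has self-intersection $+1$; since these normal Euler numbers are opposite, the fiber sum is well-defined and inherits a symplectic structure by Gompf's theorem.

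First I would record the two relevant formulas. For a fiber sum along a genus $g$ surface $\Sigma$, Novikov additivity gives
$$\sigma(L) = \sigma(Y) + \sigma(\mathbb{T}^4 \# \CPb),$$
because the gluing occurs along a circle bundle over $\Sigma$ that bounds on both sides and contributes nothing to the signature. For the Euler characteristic, using that a tubular neighborhood of $\Sigma$ deformation retracts onto $\Sigma$ and that its boundary circle bundle has vanishing Euler characteristic, inclusion--exclusion yields
$$e(L) = e(Y) + e(\mathbb{T}^4 \# \CPb) - 2\,e(\Sigma) = e(Y) + e(\mathbb{T}^4 \# \CPb) - 2(2-2g).$$

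Then I would substitute the known values. From the preceding lemma we have $e(Y) = 112$ and $\sigma(Y) = 12$, while $e(\mathbb{T}^4 \# \CPb) = 1$ and $\sigma(\mathbb{T}^4 \# \CPb) = -1$ were recorded above. With $g = 2$ we get $e(\Sigma) = 2 - 4 = -2$, hence
$$e(L) = 112 + 1 - 2(-2) = 117, \qquad \sigma(L) = 12 + (-1) = 11,$$
as claimed.

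There is essentially no analytic obstacle here; the entire content is bookkeeping. The one point meriting a sentence of justification is that the fiber sum is legitimately defined, i.e. the two gluing surfaces share the same genus (both equal to $2$) and carry opposite self-intersection numbers ($-1$ and $+1$), which was arranged in the construction immediately preceding the statement. Once this is noted, both invariants follow by direct substitution into the standard additivity formulas, exactly as in the analogous computations for $Y$ and $V$ above.
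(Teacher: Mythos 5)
Your proposal is correct and follows the same route as the paper, which simply records $e(\mathbb{T}^4\#\CPb)=1$, $\sigma(\mathbb{T}^4\#\CPb)=-1$ and invokes the standard additivity of $e$ and $\sigma$ under symplectic fiber sum along a genus~$2$ surface (here $e(L)=112+1+4(2)-4=117$, $\sigma(L)=12-1=11$). Your extra remark that the gluing surfaces have matching genus and opposite self-intersections $-1$ and $+1$ is a point the paper arranges in the construction but does not restate in the computation; it is a harmless and correct addition.
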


We can conclude that $L$ is symplectic, simply connected, an exotic copy of $63\mathbb{CP}^{2} \# 52\overline{\mathbb{CP}}^{2}$, which is also smoothly minimal. As in the previous case, by performing knot surgery we realize infinitely many pairwise non-diffeomorphic, irreducible, symplectic and nonsymplectic  4-manifolds.

\section{Constructions of exotic $4$-manifolds with nonnegative signatures from Cartwright-Steger surfaces}\label{sec:positive CS}

In this section, we will construct families of simply connected non-spin symplectic and smooth 4-manifolds with nonnegative signatures and small $\chi$. We consider the complex surface $\tilde{M}$ that we constructed from Cartwright-Steger surfaces in Section \ref{forM1}), with $c_1^2(\tilde{M})= 36$ and $e(\tilde{M})=12$. Using the formulas $\sigma = (c_1^2 - 2e)/3$ and $\chi = (e + \sigma)/4$, we have $\sigma(\tilde{M})=\chi(\tilde{M})=4$. Recall that by Lemma \ref{lemm3}, $\tilde{M} \#\CPb$ contains a genus $5$ symplectic curve $\Sigma_5$ of self intersection $-2$ and $i_*\pi_1(\Sigma_5)\rightarrow \pi_1(\tilde{M}\#\CPb)$ being a surjective homomorphism. In our construction of symplectic $4$-manifolds with nonnegative signatures, $\tilde{M} \#\CPb$ along with $\Sigma_5$ will serve as our first building block. For our second building block, we will use the minimal, simply connected and symplectic 4-manifolds $X_{g,g+2}$ and $X_{g,g+1}$  \cite{AP2} (see also \cite{AS}) for which the following theorems hold:

\begin{theo}\label{theorem} 
For any integer $g \geq 1$, there exist a minimal symplectic 4-manifold $X_{g,g+2}$ obtained via Luttinger surgery such that
\begin{itemize}
\item[(i)] $X_{g,g+2}$ is simply connected
\item[(ii)] $e(X_{g,g+2})= 4g+2$, $\sigma (X_{g,g+2}) = - 2$, $c_1^{2}(X_{g,g+2}) = 8g-2$, and\/ $\chi(X_{g,g+2}) = g$.
\item[(iii)] $X_{g,g+2}$ contains the symplectic surface $\Sigma$ of genus $2$ with self-intersection $0$ and $2$ genus $g$ surfaces with self-intersection $-1$ intersecting $\Sigma$ positively and transversally.  
\end{itemize}
\end{theo}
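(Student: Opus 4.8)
The plan is to realize $X_{g,g+2}$ by a symplectic connected sum followed by a prescribed sequence of Luttinger surgeries, exactly paralleling the construction of $X_{7,9}$ reviewed above, and then to verify the four asserted properties in turn. First I would assemble the first building block: starting from $\mathbb{T}^2\times\mathbb{T}^2$ with its product symplectic form, symplectically resolve the single intersection point of the dual tori $\mathbb{T}^2\times\{pt\}$ and $\{pt\}\times\mathbb{T}^2$ to obtain a genus-$2$ symplectic surface of self-intersection $+2$, then blow up twice to produce a genus-$2$ surface $\Sigma_2\subset \mathbb{T}^4\#2\CPb$ with $[\Sigma_2]^2=0$ together with two exceptional $(-1)$-spheres each meeting it once. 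The second block is the product $\Sigma_2\times\Sigma_g$, carrying the genus-$2$ surface $\Sigma_2\times\{pt\}$ of self-intersection $0$. Forming the symplectic connected sum $(\mathbb{T}^4\#2\CPb)\#_{\Sigma_2=\Sigma_2\times\{pt\}}(\Sigma_2\times\Sigma_g)$ along these matched genus-$2$ surfaces and performing the family of $\pm1$ Luttinger surgeries along Lagrangian tori descending from the $\Sigma_2\times\Sigma_g$ factor produces $X_{g,g+2}$; by \cite{ADK} each such surgery carries a symplectic form, so $X_{g,g+2}$ is symplectic.

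For the numerical invariants in (ii), Gompf's connected-sum formula \cite{gompf} along genus-$2$ surfaces (Euler number $e(\Sigma_2)=-2$) gives $e=e(\mathbb{T}^4\#2\CPb)+e(\Sigma_2\times\Sigma_g)-2e(\Sigma_2)=2+(4g-4)+4=4g+2$ and $\sigma=\sigma(\mathbb{T}^4\#2\CPb)+\sigma(\Sigma_2\times\Sigma_g)=-2+0=-2$. Since Luttinger surgery leaves $e$ and $\sigma$ unchanged by Lemma~\ref{invariants}(2), these values persist in $X_{g,g+2}$, and the standard identities $c_1^2=2e+3\sigma$ and $\chi=(e+\sigma)/4$ then yield $c_1^2=8g-2$ and $\chi=g$.

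The heart of the argument, and the step I expect to be the main obstacle, is simple connectivity (i). Here I would apply Van Kampen's theorem to the symplectic sum decomposition: $\pi_1(X_{g,g+2})$ is generated by the standard loops $a_1,b_1,a_2,b_2$ of the genus-$2$ factor together with $c_1,d_1,\dots,c_g,d_g$ of the genus-$g$ factor, subject to the surface relations $[a_1,b_1][a_2,b_2]=1$, $\prod_{j}[c_j,d_j]=1$, the gluing identifications along $\Sigma_2$, and one commutator relation per Luttinger surgery. The relations have the form recorded in \eqref{Luttinger relations} (for the precise list in this construction see \cite{AP2, AS}): identities such as $[b_1^{-1},d_1^{-1}]=a_1$ and $[a_1^{-1},d_1]=b_1$ successively express the $a_i,b_i$ as commutators of the $c_j,d_j$, while the remaining surgery relations and the meridians introduced by the sum force each $c_j,d_j$ to be trivial. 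The delicate point is the bookkeeping: one must verify that the two exceptional $(-1)$-spheres from the blow-ups supply disks trivializing the relevant boundary loops, and that the iterated substitutions collapse the whole group to the identity. I would reproduce this word-by-word reduction as carried out in \cite{AP2, AS} (with the companion $\pi_1$-computation of \cite{AP3}).

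Finally, for (iii) I would track the distinguished surfaces through the construction. The surface $\Sigma=\Sigma_2\times\{pt\}$ is disjoint from all surgery tori, so it descends to a symplectic genus-$2$ surface of self-intersection $0$ in $X_{g,g+2}$; the two genus-$g$ surfaces arise as the internal sums of the punctured exceptional spheres in $\mathbb{T}^4\#2\CPb\setminus\nu(\Sigma_2)$ with punctured copies of $\Sigma_g$ in $\Sigma_2\times\Sigma_g\setminus\nu(\Sigma_2\times\{pt\})$, each meeting $\Sigma$ positively and transversally once and having self-intersection $-1$. Minimality follows from Usher's theorem \cite{usher} together with the fact that Luttinger surgery preserves minimality, since the building blocks are glued along positive-genus surfaces. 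This completes the verification of all four assertions.
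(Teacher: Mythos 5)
Your proposal takes essentially the same route as the paper: the paper's own ``proof'' of this theorem is just a citation to \cite{AP2} and Section 5 of \cite{AS}, and the construction you outline (symplectic sum of $\mathbb{T}^4\#2\CPb$ with $\Sigma_2\times\Sigma_g$ along genus-$2$ surfaces of square $0$, followed by $\pm 1$ Luttinger surgeries, with the word-by-word $\pi_1$ reduction deferred to \cite{AP2, AS}) is exactly the one the paper itself reviews for the special cases $X_{7,9}$, $X_{2,4}$, and $X_{1,3}$. Your invariant computations ($e=4g+2$, $\sigma=-2$, hence $c_1^2=8g-2$ and $\chi=g$) and your identification of the descended surfaces in (iii) are correct.
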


\begin{theo}\label{thm3}  
There exist a minimal symplectic 4-manifold $X_{g,g+1}$ obtained via Luttinger surgery such that
\begin{itemize}
\item[(i)] $X_{g,g+1}$ is simply connected
\item[(ii)] $e(X_{g,g+1})= 4g+1$, $\sigma (X_{g,g+1}) = - 1$, $c_1^{2}(X_{g,g+2}) = 8g-1$, and\/ $\chi(X_{g,g+1}) = g$.
\item[(iii)] $X_{g,g+1}$ contains the symplectic surface $\Sigma$ of genus $2$ with self-intersection $0$, genus $\Sigma_{g+1}$ symplectic surface with self-intersection $0$ intersecting $\Sigma$ positively and transversally.  
\end{itemize}
\end{theo}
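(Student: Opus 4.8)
The plan is to build $X_{g,g+1}$ in exact parallel with the manifold $X_{g,g+2}$ of Theorem~\ref{theorem}, whose construction is recalled above and carried out in full in \cite{AP2, AS}; the only change is that one performs a single symplectic blow-up in place of two, which raises the signature from $-2$ to $-1$. Concretely, starting from $\mathbb{T}^2\times\mathbb{T}^2$ with its product symplectic form, one symplectically resolves the transverse intersection of the dual tori $\mathbb{T}^2\times\{pt\}$ and $\{pt\}\times\mathbb{T}^2$ to obtain a symplectic genus two surface, and blows up once so that the exceptional sphere meets it, with self-intersection matched to that of $\Sigma_2\times\{pt\}$ as in \cite{AP2}. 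One then forms the symplectic connected sum of this blown-up $\mathbb{T}^4$ with the product $\Sigma_2\times\Sigma_g$ along the two genus two surfaces, following Gompf \cite{gompf}, and finally performs the sequence of Luttinger surgeries along the Lagrangian tori descending from $\Sigma_2\times\Sigma_g$, exactly as in the relations \eqref{Luttinger relations}.

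For part~(ii) I would compute the invariants by comparison with $X_{g,g+2}$. By Theorem~\ref{theorem} the two-blow-up manifold $X_{g,g+2}$ has $e=4g+2$ and $\sigma=-2$; since a symplectic blow-up raises the Euler characteristic by $1$ and lowers the signature by $1$, performing one blow-up in place of two yields $e(X_{g,g+1})=4g+1$ and $\sigma(X_{g,g+1})=-1$, the intervening Luttinger surgeries leaving both unchanged by Lemma~\ref{invariants}. Hence $c_1^2=2e+3\sigma=8g-1$ and $\chi=(e+\sigma)/4=g$. For part~(iii), a copy of the genus two fiber $\Sigma$ of self-intersection $0$ survives the sum and the surgeries; moreover, exactly as in the $X_{6,8}$ discussion above, tubing the punctured exceptional sphere to a punctured horizontal genus $g$ surface $\{pt\}\times\Sigma_g$ across the sum region produces an embedded symplectic genus $g+1$ surface of self-intersection $0$ meeting $\Sigma$ once, positively and transversally. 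Because the exceptional sphere is thereby absorbed into a positive-genus surface, no $-1$ sphere remains to be blown down, and $X_{g,g+1}$ is minimal by Usher's theorem \cite{usher}.

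The main obstacle is part~(i), simple connectivity, and this is where the real work lies. For it I would apply van Kampen's theorem across the symplectic sum: as in \cite{AP3, AS}, the meridians of the two summed genus two surfaces die in the respective complements, so $\pi_1(X_{g,g+1})$ is generated by the images of the standard loops $a_i,b_i$ of $\pi_1(\Sigma_2)$ and $c_j,d_j$ of $\pi_1(\Sigma_g)$, together with the surface generators. The Luttinger relations of the form $[b_i^{-1},d_i^{-1}]=a_i$ and $[a_i^{-1},d_i]=b_i$ in \eqref{Luttinger relations} then express each $a_i,b_i$ as a commutator of the remaining generators and, fed back in, collapse these loops, while the relations $[a_1,b_1][a_2,b_2]=1$ and $\prod_j[c_j,d_j]=1$ together with the identification of meridians kill the surviving surface generators. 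Carefully tracking base points and verifying that this system of relations genuinely trivializes $\pi_1$ after the blow-up and the sum is the delicate step, but it proceeds exactly along the lines already established for $X_{g,g+2}$ in \cite{AS} and for the positive-signature families in \cite{AP3}.
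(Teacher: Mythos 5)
The paper itself gives no argument here beyond a pointer to \cite{AP2} and Section 5 of \cite{AS}, but the constructions of $X_{2,3}$ and $X_{1,2}$ recalled in Sections 6.3--6.4 show what the intended construction is, and your proposal deviates from it in a way that breaks. You propose to resolve the intersection of the dual tori $\mathbb{T}^2\times\{pt\}$ and $\{pt\}\times\mathbb{T}^2$ and then blow up \emph{once} instead of twice. The resolved surface has square $+2$, so after a single blow-up your genus two surface has square $+1$, not $0$, and the Gompf symplectic sum with $\Sigma_2\times\{pt\}\subset\Sigma_2\times\Sigma_g$ (which has square $0$) cannot be formed: the sum requires the two surfaces to have opposite self-intersection numbers. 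This is exactly why the two-blow-up version yields $X_{g,g+2}$ with $\sigma=-2$; you cannot drop one blow-up without changing the input surfaces. The actual construction in \cite{AP2} starts instead from $\mathbb{T}^2\times\{pt\}$ and the braided torus $T_\beta$ in the class $2[\{pt\}\times\mathbb{T}^2]$, which meet at \emph{two} points; one blows up one intersection point and symplectically resolves the other, producing a genus two surface of square $(A+2B-2E)^2=0$ in $\mathbb{T}^4\#\CPb$, after which the sum with $\Sigma_2\times\Sigma_g$ and the Luttinger surgeries proceed as you describe and give $e=4g+1$, $\sigma=-1$.

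Part (iii) has a related error. Tubing the once-punctured exceptional sphere to a once-punctured copy of $\{pt\}\times\Sigma_g$ yields a surface of genus $0+g=g$ and self-intersection $-1+0=-1$; that is the surface appearing in part (iii) of Theorem~\ref{theorem} for $X_{g,g+2}$, not the genus $g+1$ surface of square $0$ asserted here. In the correct construction the genus $g+1$, square-$0$ surface comes from tubing the punctured \emph{dual torus} of square $0$ (which meets the genus two surface once in $\mathbb{T}^4\#\CPb$) to a punctured $\{pt\}\times\Sigma_g$, exactly as the paper does for $\Sigma_3\subset X_{2,3}$ in Section 6.3. Your outline of the $\pi_1$ computation and the invariant count in (ii) are in the right spirit, but they are contingent on a construction that, as written, cannot be carried out.
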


\begin{proof}
For the details of the constructions of $X_{g,g+2}$ and  $X_{g,g+1}$, we refer the reader to \cite{AP2} and Section 5 of \cite{AS}.
\end{proof}

\subsection{Symplectic and smooth manifolds with $(\sigma,\chi)=(1,10)$}

To construct simply connected, symplectic and smooth $4$-manifolds with $(\sigma,\chi)=(1,10)$, we use $\tilde{M} \#\CPb$ containing genus $5$ curve $\Sigma_5$ of self intersection $-2$ and $X_{2,4}$ in the notation of Theorem \ref{theorem}. 

For the convenience of the reader, we briefly review the construction of $X_{2,4}$. Take a copy of $\mathbb{T}^2 \times \{pt\}$ and $\{pt\} \times \mathbb{T}^2$ in $\mathbb{T}^2 \times \mathbb{T}^2$ equipped with the product symplectic form, and symplectically resolve the intersection point of these dual symplectic tori. The resolution produces symplectic genus two surface of self intersection $+2$ in $\mathbb{T}^2 \times \mathbb{T}^2$. By symplectically blowing up this surface twice, in $\mathbb{T}^4 \# 2 \overline{\mathbb{CP}}^{2}$, we obtain a symplectic genus 2 surface $\Sigma_2$ with self-intersection $0$, with two $-1$ spheres (i.e. the exceptional spheres resulting from the blow-ups) intersecting it positively and transversally. We also note that $\Sigma_2$ has a dual symplectic torus $\mathbb{T}^2$ of self intersection zero intersecting $\Sigma_2$ positively and transversally at one point. Next, we form the symplectic connected sum of $\mathbb{T}^4\#2\overline{\mathbb{CP}}^{2}$ with $\Sigma_2 \times \Sigma_2$ along the genus two surfaces  $\Sigma_2$ and $\Sigma_2 \times \{pt\}$. By performing the sequence of $8$ appropriate $\pm 1$ Luttinger surgeries on $(\mathbb{T}^4 \# 2 \overline{\mathbb{CP}}^{2}) \#_{\Sigma_2 = \Sigma_2 \times \{pt\}} (\Sigma_2 \times \Sigma_2)$, we obtain the symplectic $4$-manifold $X_{2,4}$.

It can be seen from the construction that there are genus $3$ curves of self intersections $0$ inside $X_{2,4}$. Each of them comes from the internal sum of the one of the punctured tori in $\mathbb{T}^4 \# 2\overline{\mathbb{CP}}^{2} \setminus \nu (\Sigma_2)$ and one of the punctured genus two surfaces in $\Sigma_2 \times \Sigma_2 \setminus \nu (\Sigma_2 \times \{pt\})$. Such a genus 3 curve of square zero intersects $\Sigma_2$ positively and transversally at one point. We symplectically resolve this intersection and obtain a genus $5$ surface $\Sigma_5'$ of square $+2$ in $X_{2,4}$.

Since the two symplectic building blocks $\tilde{M} \#\CPb$ and $X_{2,4}$ contain symplectic genus $5$ surfaces of self intersections $-2$ and $+2$ respectively, we can form their symplectic connected sum along these surfaces $\Sigma_5$ and $\Sigma_5'$. Let
\begin{equation*}
M_{1,10} = (\tilde{M} \#\CPb) \#_{\Sigma_5 = \Sigma_5'} X_{2,4}.
\end{equation*}  

\begin{lemm}
$\sigma(M_{1,10}) = 1$, $\chi_h(M_{1,10}) =10$, $e(M_{1,10}) = 39$ and $c_1^2 (M_{1,10}) = 81$.
\end{lemm}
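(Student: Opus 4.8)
The plan is to compute the four invariants $\sigma$, $\chi_h$, $e$, and $c_1^2$ of $M_{1,10}$ directly from the symplectic connected sum formula applied to the two building blocks $\tilde M\#\CPb$ and $X_{2,4}$, glued along genus $5$ surfaces. First I would record the invariants of the two pieces. For $\tilde M\#\CPb$, Lemma~\ref{lemm2} gives $c_1^2(\tilde M)=36$ and $c_2(\tilde M)=e(\tilde M)=12$, so $e(\tilde M\#\CPb)=13$; using $\sigma=(c_1^2-2e)/3$ we get $\sigma(\tilde M)=4$, hence $\sigma(\tilde M\#\CPb)=3$, and $c_1^2(\tilde M\#\CPb)=c_1^2(\tilde M)-1=35$. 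For the second piece, Theorem~\ref{theorem} with $g=2$ yields $e(X_{2,4})=10$, $\sigma(X_{2,4})=-2$, and $c_1^2(X_{2,4})=14$.

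Next I would apply the gluing formulas. For the symplectic connected sum $X\#_\Sigma Y$ along a genus $h$ surface, the Euler characteristic satisfies
\begin{equation*}
e(M_{1,10})=e(\tilde M\#\CPb)+e(X_{2,4})-2\,e(\Sigma_5),
\end{equation*}
where $e(\Sigma_5)=2-2\cdot 5=-8$, giving $e(M_{1,10})=13+10-2(-8)=39$. The signature is additive under the fiber sum, so $\sigma(M_{1,10})=\sigma(\tilde M\#\CPb)+\sigma(X_{2,4})=3+(-2)=1$. From these two I can derive the remaining invariants using the standard relations for symplectic (hence almost complex) $4$-manifolds: $c_1^2=2e+3\sigma=2(39)+3(1)=81$, and $\chi_h=(e+\sigma)/4=(39+1)/4=10$. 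These match all four claimed values.

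The one point requiring care, which I expect to be the main (minor) obstacle, is verifying that the additivity of $\sigma$ and the stated $e$-formula are legitimately applicable here: the two surfaces being glued have \emph{different} self-intersections ($-2$ for $\Sigma_5\subset\tilde M\#\CPb$ and $+2$ for $\Sigma_5'\subset X_{2,4}$). The symplectic sum requires the normal bundles to be opposite, i.e.\ the self-intersection numbers to be negatives of one another, which is exactly satisfied here since $-2=-(+2)$; this is precisely why these two particular surfaces were engineered to have self-intersections $\mp 2$. Once this compatibility is noted, Novikov additivity of the signature under gluing along a hypersurface (equivalently, Gompf's computation for symplectic sums) and the Euler-characteristic formula apply verbatim. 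The derivations of $c_1^2$ and $\chi_h$ then follow formally from $e$ and $\sigma$ via the universal relations, so no further geometric input is needed.
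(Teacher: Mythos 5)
Your proposal is correct and follows essentially the same route as the paper: the paper computes $\sigma$ by Novikov additivity and $\chi_h$ additively with the correction term $g-1=4$, then derives $e$ and $c_1^2$ from the universal relations, whereas you compute $\sigma$ and $e$ (via $e(X)+e(Y)-2e(\Sigma_5)$, which is equivalent) and derive the other two the same way. All your intermediate values ($e(\tilde M\#\CPb)=13$, $\sigma(\tilde M\#\CPb)=3$, $e(X_{2,4})=10$, $\sigma(X_{2,4})=-2$) and the final answers agree with the paper, and your remark about the opposite self-intersections $\mp 2$ being required for the symplectic sum is a correct and pertinent observation.
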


\begin{proof} 
We have $\sigma(M_{1,10}) = \sigma(\tilde{M} \#\CPb) + \sigma(X_{2,4}) = 3+(-2) = 1$ and $\chi_h(M_{1,10}) = \chi(\tilde{M} \#\CPb) + \chi(X_{2,4}) + (5-1) = 4+2+4= 10$. Using the
formulas $c_1^2 = 3\sigma + 2e$ and $e = 4\chi - \sigma$, we compute $e(M_{1,10})$ and $c_1^2 (M_{1,10})$ as given.
\end{proof}

Let us now show that $M_{1,10}$ is an exotic copy of $19\mathbb{CP}^{2} \# 18\overline{\mathbb{CP}}^{2}$. Notice that $M_{1,10}$ is symplectic and simply connected, which follows from Gompf's Symplectic Connected Sum Theorem \cite{gompf} and Seifert-Van Kampen's Theorem respectively. Using Freedman's classification theorem for simply-connected $4$-manifolds and the lemma above, $M_{1,10}$ is homeomorphic to $19\mathbb{CP}^{2} \# 18\overline{\mathbb{CP}}^{2}$. Since $M_{1,10}$ is symplectic, by Taubes's theorem it has a non-trivial Seiberg-Witten invariant. Next, by appealing to the connected sum theorem for the Seiberg-Witten invariants, we deduce that the Seiberg-Witten invariant of $19\CP\#18\CPb$ is trivial. Thus, $M_{1,10}$ is not diffeomorphic to $19\CP\#18\CPb$. Furthermore, $M_{1,10}$ is a minimal symplectic $4$-manifold by Usher's Minimality Theorem \cite{usher}. Since symplectic minimality implies smooth minimality, $M_{1,10}$ is also smoothly minimal, and thus is smoothly irreducible \cite{HK}. As in Section~\ref{sec:positive}, by performing appropriate generalized torus surgeries, we realize infinitely many pairwise non-diffeomorphic, irreducible, symplectic and nonsymplectic $4$-manifolds (see \cite{AP2} for the further details of such construction).

\subsection{Symplectic and smooth manifolds with $(\sigma,\chi)=(0,9)$}

In this construction, our first building block is again $\tilde{M} \#\CPb$ containing genus $5$ symplectic curve $\Sigma_5$ of self intersection $-2$. For our second building block, we use $X_{1,3}$ in the notation of Theorem \ref{theorem}. 

Let us recall the construction of $X_{1,3}$. In constructing $X_{1,3}$, we first obtain a symplectic genus 2 surface $\Sigma_2$ with self-intersection $0$, with two $-1$ spheres intersecting it positively and transversally in $\mathbb{T}^4 \# 2 \overline{\mathbb{CP}}^{2}$. In addition, there are symplectic tori $\mathbb{T}^2$ of self intersections zero each of which intersects $\Sigma_2$ positively and transversally once. Next, we form the symplectic connected sum of $\mathbb{T}^4\#2\overline{\mathbb{CP}}^{2}$ with $\Sigma_2 \times \Sigma_1$ along the genus two surfaces  $\Sigma_2$ and $\Sigma_2 \times \{pt\}$. By performing the sequence of $6$ appropriate $\pm 1$ Luttinger surgeries on $(\mathbb{T}^4 \# 2 \overline{\mathbb{CP}}^{2}) \#_{\Sigma_2 = \Sigma_2 \times \{pt\}} (\Sigma_2 \times \Sigma_1)$, we obtain the symplectic $4$-manifold $X_{1,3}$. Therefore, we see that $X_{1,3}$ contains a symplectic surface $\Sigma_2$ with self intersection $0$ and two tori $T_{1}$ and $T_{2}$ with self intersections $-1$ which have positive and transverse intersections with $\Sigma_2$. Note that $T_{1}$ and $T_{2}$ result from the internal sum of the punctured exceptional spheres in $\mathbb{T}^4 \# 2\overline{\mathbb{CP}}^{2} \setminus \nu (\Sigma_2)$ and the punctured tori in $\Sigma_2 \times \Sigma_1 \setminus \nu (\Sigma_2 \times \{pt\})$. Moreover, there are genus 2 curves of self intersections 0 inside $X_{1,3}$. Each of them comes from the internal sum of the one of the punctured tori in $\mathbb{T}^4 \# 2\overline{\mathbb{CP}}^{2} \setminus \nu (\Sigma_2)$ and one of the punctured tori in $\Sigma_2 \times \Sigma_1 \setminus \nu (\Sigma_2 \times \{pt\})$. Such a genus 2 curve $\Sigma_2'$ of square zero intersects $\Sigma_2$ positively and transversally at one point. We symplectically resolve the intersections of $\Sigma_2$ with $T_{1}$ and $\Sigma_2$ with $\Sigma_2'$. Thus we obtain a genus 5 surface $\Sigma_5$ of square $+3$ in $X_{1,3}$. By blowing up $\Sigma_5$ at one point, we obtain a genus $5$ surface $\Sigma_5'$ of square $+2$ in $X_{1,3}\# \CPb$.

Since the two symplectic building blocks $\tilde{M} \#\CPb$ and $X_{1,3}\#\CPb$ contain symplectic genus $5$ surfaces of self intersections $-2$ and $+2$ respectively, we can form their symplectic connected sum along these surfaces $\Sigma_5$ and $\Sigma_5'$. Let
\begin{equation*}
M_{0,9} = (\tilde{M} \#\CPb) \#_{\Sigma_5 = \Sigma_5'} (X_{1,3}\#\CPb).
\end{equation*}  

\begin{lemm}
$\sigma(M_{0,9}) = 0$, $\chi(M_{0,9}) =9$, $e(M_{0,9}) = 36$ and $c_1^2 (M_{0,9}) = 72$.
\end{lemm}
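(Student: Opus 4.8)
The statement to prove is the computation of invariants for the symplectic connected sum
\[
M_{0,9} = (\tilde{M} \#\CPb) \#_{\Sigma_5 = \Sigma_5'} (X_{1,3}\#\CPb),
\]
namely $\sigma(M_{0,9}) = 0$, $\chi(M_{0,9}) = 9$, $e(M_{0,9}) = 36$ and $c_1^2(M_{0,9}) = 72$.

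\textbf{Plan.} The entire computation rests on the standard additivity/correction formulas for the symplectic connected sum, exactly as in the two preceding lemmas. First I would record the invariants of the two building blocks. For the first block, I computed earlier that $\tilde{M}$ has $c_1^2 = 36$, $e = 12$, hence $\sigma(\tilde{M}) = \chi(\tilde{M}) = 4$; blowing up once gives $\sigma(\tilde{M}\#\CPb) = 4 + (-1) = 3$ while $\chi$ is unchanged, so $\chi(\tilde{M}\#\CPb) = 4$. For the second block, Theorem~\ref{theorem} with $g = 1$ gives $\sigma(X_{1,3}) = -2$ and $\chi(X_{1,3}) = 1$; blowing up once to pass to $X_{1,3}\#\CPb$ gives $\sigma(X_{1,3}\#\CPb) = -2 + (-1) = -3$ and $\chi(X_{1,3}\#\CPb) = 1$.

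\textbf{Key steps.} Signature is additive under symplectic connected sum along surfaces of opposite-but-equal self-intersection (the gluing contributes nothing to $\sigma$ once the normal bundles cancel), so
\[
\sigma(M_{0,9}) = \sigma(\tilde{M}\#\CPb) + \sigma(X_{1,3}\#\CPb) = 3 + (-3) = 0.
\]
For the holomorphic Euler characteristic I would apply the same correction term used in the $(\sigma,\chi)=(1,10)$ lemma: summing $\chi$ of the two pieces and adding $(g-1) = 5-1 = 4$ to account for the genus of the gluing surface $\Sigma_5$, which yields
\[
\chi(M_{0,9}) = \chi(\tilde{M}\#\CPb) + \chi(X_{1,3}\#\CPb) + (5-1) = 4 + 1 + 4 = 9.
\]
The remaining two invariants then follow from the universal identities $e = 4\chi - \sigma$ and $c_1^2 = 3\sigma + 2e$ for a symplectic (minimal or not) $4$-manifold:
\[
e(M_{0,9}) = 4\cdot 9 - 0 = 36, \qquad c_1^2(M_{0,9}) = 3\cdot 0 + 2\cdot 36 = 72.
\]

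\textbf{Main obstacle.} There is no genuine difficulty here — the proof is a bookkeeping exercise identical in structure to the $M_{1,10}$ lemma, and the only point requiring a moment's care is getting the blow-up corrections to the signatures right (each $\CPb$ summand lowers $\sigma$ by one and leaves $\chi$ fixed) and correctly inserting the $(g-1)$ Euler-characteristic correction for the fiber-sum along the genus~$5$ surface. The self-intersection data ($-2$ on the $\tilde{M}\#\CPb$ side and $+2$ on the $X_{1,3}\#\CPb$ side) confirms the surfaces can be glued after reversing orientation of the normal bundle, so no extra blow-up or normalization is needed, and the additivity of $\sigma$ applies directly.
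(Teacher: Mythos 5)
Your proof is correct and follows essentially the same route as the paper: additivity of $\sigma$ and of $\chi$ with the $(g-1)=4$ correction for the genus $5$ fiber sum, giving $\sigma = 3+(-3)=0$ and $\chi = 4+1+4=9$, then $e = 4\chi-\sigma$ and $c_1^2 = 3\sigma+2e$. Your extra bookkeeping of where the $3$ and $-3$ come from (blow-up corrections to $\sigma(\tilde M)=4$ and $\sigma(X_{1,3})=-2$) is just a slightly more explicit version of what the paper leaves implicit.
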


\begin{proof} 
We have $\sigma(M_{0,9}) = \sigma(\tilde{M} \#\CPb) + \sigma(X_{1,3}\#\CPb) = 3+(-3) = 0$ and $\chi(M_{0,9}) = \chi(\tilde{M} \#\CPb) + \chi(X_{1,3}\#\CPb) + (5-1) = 4+1+4= 9$. Consequently, we compute $e(M_{0,9})$ and $c_1^2 (M_{0,9})$ as given in the statement.
\end{proof}

As above, we show that $M_{0,9}$ is an exotic copy of $17\mathbb{CP}^{2} \# 17\overline{\mathbb{CP}}^{2}$ and $M_{0,9}$ is also smoothly irreducible. As in Section~\ref{sec:positive}, by performing generalized torus surgeries, we realize infinitely many pairwise non-diffeomorphic, irreducible, symplectic and nonsymplectic  $4$-manifolds (see \cite{AP2} for the further details of such construction).

\subsection{Symplectic and smooth manifolds with $(\sigma,\chi)=(2,10)$}\label{2,10}

In this case, the first symplectic building blocks is $\tilde{M} \#\CPb$ along the genus $5$ curve $\Sigma_5$ of self intersection $-2$. Our the second symplectic building block is $X_{2,3}$ in the notation of Theorem \ref{thm3}, which was constructed in \cite{AP2}. 

Let us recall the construction of $X_{2,3}$. We take a copy of $\mathbb{T}^2 \times \{pt\}$ and the braided torus $T_{\beta}$ representing the homology class  $2[\{pt\} \times \mathbb{T}^2]$ in $\mathbb{T}^2 \times \mathbb{T}^2$ (see ~\cite{AP2}, page 4 for the construction of $T_{\beta}$). The tori $\mathbb{T}^2 \times \{pt\}$ and $T_{\beta}$ intersect at two points. We symplectically blow up one of these intersection points, and symplectically resolve the other intersection point to obtain the symplectic genus two surface of self intersection $0$ in  $\mathbb{T}^4 \# \overline{\mathbb{CP}}^{2}$ (see ~\cite{AP2}, pages 3-4). The symplectic genus $2$ surface $\Sigma_2$ has a dual symplectic torus $\mathbb{T}^2$ of self intersections zero intersecting $\Sigma_2$ positively and transversally at one point. We form the symplectic connected sum of $\mathbb{T}^4 \# \overline{\mathbb{CP}}^{2}$ with $\Sigma_2 \times \Sigma_2$ along the genus two surfaces  $\Sigma_2$ and $\Sigma_2 \times \{pt\}$. By performing the sequence of $4$ appropriate $\pm 1$ Luttinger surgeries on $(\mathbb{T}^4 \#  \overline{\mathbb{CP}}^{2}) \#_{\Sigma_2 = \Sigma_2 \times \{pt\}} (\Sigma_2 \times \Sigma_2)$, we obtain the symplectic $4$-manifold $X_{2,3}$ constructed in \cite{AP2}. It can be seen from the construction that, $X_{2,3}$ contains a symplectic surface $\Sigma_3$ with self intersection $0$, resulting from the internal sum of the punctured torus
in $\mathbb{T}^4 \# \overline{\mathbb{CP}}^{2} \setminus \nu (\Sigma_2)$ and one of the punctured genus two surfaces in $\Sigma_2 \times \Sigma_2 \setminus \nu (\Sigma_2 \times \{pt\})$. $\Sigma_3$ intersects $\Sigma_2$ positively and transversally at one point. (The reader may see Section 5.3 and Figure 7 in \cite{AS} showing the construction steps for a similar case.) We now symplectically resolve their intersection which gives genus five surface $\Sigma_5'$ of self intersection $+2$ in $X_{2,3}$.

Let
\begin{equation*}
M_{2,10} = (\tilde{M} \#\CPb) \#_{\Sigma_5 = \Sigma_5'} (X_{2,3}).
\end{equation*}  

\begin{lemm}\label{l2}
$\sigma(M_{2,10}) = 2$, $\chi_h(M_{2,10}) =10$, $e(M_{2,10}) = 38$ and $c_1^2 (M_{2,10}) = 82$.
\end{lemm}

\begin{proof} 
We have $\sigma(M_{2,10}) = \sigma(\tilde{M} \#\CPb) + \sigma(X_{2,3}) = 3+(-1) = 2$ and $\chi_h(M_{2,10}) = \chi(\tilde{M} \#\CPb) + \chi(X_{2,3}) + (5-1) = 4+2+4= 10$. Consequently, we compute $e(M_{2,10})$ and $c_1^2 (M_{2,10})$.
\end{proof}

Similarly, using the Lemma \ref{l2} and the above mentioned theorems, we see that $M_{2,10}$ is an exotic copy of $19\mathbb{CP}^{2} \# 17\overline{\mathbb{CP}}^{2}$, which is smoothly irreducible.

\subsection{Symplectic and smooth manifolds with $(\sigma,\chi)=(1,9)$}

Similar to the previous cases, we use $\tilde{M} \#\CPb$ containing genus 5 curve $\Sigma_5$ of self intersection $-2$, and $X_{1,2}\#\CPb$ in the notation of Theorem \ref{thm3}, constructed in \cite{AP2}.

To construct $X_{1,2}$, we first obtain a symplectic genus two surface of self intersection $0$ in $\mathbb{T}^4 \# \overline{\mathbb{CP}}^{2}$ as follows. Let us take a copy of $\mathbb{T}^2 \times \{pt\}$ and the braided torus $T_{\beta}$ representing the homology class  $2[\{pt\} \times \mathbb{T}^2]$ in $\mathbb{T}^2 \times \mathbb{T}^2$. The tori $\mathbb{T}^2 \times \{pt\}$ and $T_{\beta}$ intersect at two points. We symplectically blow up one of these two intersection points, and symplectically resolve the other intersection point to obtain the symplectic genus two surface $\Sigma_2$ of self intersection $0$ in  $\mathbb{T}^4 \# \overline{\mathbb{CP}}^{2}$. Note that the exceptional sphere $S^2$ intersects $\Sigma_2$ positively and transversally twice. Next, we form the symplectic connected sum of $\mathbb{T}^4 \# \overline{\mathbb{CP}}^{2}$ with $\Sigma_2 \times \Sigma_1$ along the genus two surfaces  $\Sigma_2$ and $\Sigma_2 \times \{pt\}$. By performing the sequence of $6$ appropriate $\pm 1$ Luttinger surgeries on $(\mathbb{T}^4 \#  \overline{\mathbb{CP}}^{2}) \#_{\Sigma_2 = \Sigma_2 \times \{pt\}} (\Sigma_2 \times \Sigma_1)$, we obtain the symplectic $4$-manifold $X_{1,2}$. It was shown in \cite{AP2}, $X_{1,2}$ is an exotic copy of $\mathbb{CP}^{2} \# 2\overline{\mathbb{CP}}^{2}$. Observe that as a result of the internal sum of the twice punctured sphere $S^2$ in $\mathbb{T}^4 \# \overline{\mathbb{CP}}^{2} \setminus \nu (\Sigma_2)$ and the twice punctured tori in $\Sigma_2 \times \Sigma_1 \setminus \nu (\Sigma_2 \times \{pt\})$, we acquire a symplectic genus $2$ surface of self intersection $-1$ in $X_{1,2}$ intersecting $\Sigma_2$ positively and transversally twice. We symplectically resolve the two intersections and get symplectic genus $5$ surface of square $+3$ in $X_{1,2}$. We blow up this surface at one point and obtain symplectic genus $5$ surface $\Sigma_5'$ of self intersection $+2$ in $X_{1,2}\#\CPb$.
  
Let us define
\begin{equation*}
M_{1,9} = (\tilde{M} \#\CPb) \#_{\Sigma_5 = \Sigma_5'} (X_{1,2}\#\CPb).
\end{equation*}  

\begin{lemm}\label{l3}
$\sigma(M_{1,9}) = 1$, $\chi_h(M_{1,9}) =9$, $e(M_{1,9}) = 35$ and $c_1^2 (M_{1,9}) = 73$.
\end{lemm}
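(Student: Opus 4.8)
The plan is to compute the four invariants exactly as in the proofs of the preceding lemmas, since $M_{1,9}$ is again built by a single symplectic connected sum of two blown-up building blocks along genus $5$ surfaces. First I would record the input data. From Lemma~\ref{lemm2} and the opening of this section, $\tilde M$ has $c_1^2(\tilde M)=36$ and $e(\tilde M)=12$, hence $\sigma(\tilde M)=\chi_h(\tilde M)=4$; blowing up once gives $e(\tilde M\#\CPb)=13$, $\sigma(\tilde M\#\CPb)=3$, and $\chi_h(\tilde M\#\CPb)=4$ (the holomorphic Euler characteristic is unchanged by a blow-up). On the other side, Theorem~\ref{thm3} with $g=1$ gives $e(X_{1,2})=5$, $\sigma(X_{1,2})=-1$, $\chi_h(X_{1,2})=1$, so that $e(X_{1,2}\#\CPb)=6$, $\sigma(X_{1,2}\#\CPb)=-2$, and $\chi_h(X_{1,2}\#\CPb)=1$.

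Next I would apply the two additivity properties of the symplectic connected sum along a genus $g$ surface. Novikov additivity gives $\sigma(M_{1,9})=\sigma(\tilde M\#\CPb)+\sigma(X_{1,2}\#\CPb)=3+(-2)=1$, and for the holomorphic Euler characteristic I would use the standard relation $\chi_h(A\#_\Sigma B)=\chi_h(A)+\chi_h(B)+(g-1)$, which follows from $e(A\#_\Sigma B)=e(A)+e(B)-2e(\Sigma)$ together with $\chi_h=(e+\sigma)/4$. Here $g=5$, so $\chi_h(M_{1,9})=4+1+(5-1)=9$.

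Finally, I would recover the remaining two invariants from the surface identities $e=4\chi_h-\sigma$ and $c_1^2=3\sigma+2e$, which yields $e(M_{1,9})=4\cdot 9-1=35$ and $c_1^2(M_{1,9})=3+70=73$, matching the statement. There is no genuine obstacle here; the only thing requiring care is bookkeeping: correctly reading off $\sigma(X_{1,2})=-1$ from Theorem~\ref{thm3} (rather than the $-2$ of Theorem~\ref{theorem}), tracking the single blow-up applied to each block before summing, and using $g=5$ for the gluing surface. As an independent consistency check one verifies $\chi_h=(e+\sigma)/4=(35+1)/4=9$.
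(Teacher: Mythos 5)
Your computation is correct and follows the same route as the paper: Novikov additivity for the signature, the relation $\chi_h(A\#_\Sigma B)=\chi_h(A)+\chi_h(B)+(g-1)$ with $g=5$ for the holomorphic Euler characteristic, and then $e=4\chi_h-\sigma$ and $c_1^2=3\sigma+2e$ for the remaining invariants. All the input values ($\sigma(\tilde M\#\CPb)=3$, $\chi_h(\tilde M\#\CPb)=4$, $\sigma(X_{1,2}\#\CPb)=-2$, $\chi_h(X_{1,2}\#\CPb)=1$) match those used in the paper's proof.
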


\begin{proof} 
We have $\sigma(M_{1,9}) = \sigma(\tilde{M} \#\CPb) + \sigma(X_{1,2}\#\CPb) = 3+(-2) = 1$ and $\chi (M_{1,9}) = \chi(\tilde{M} \#\CPb) + \chi(X_{1,2}\#\CPb) + (5-1) = 4+1+4= 9$. Consequently, we compute $e(M_{1,9})$ and $c_1^2 (M_{1,9})$ as given.
\end{proof}

Similarly, using the Lemma \ref{l3} and the above mentioned theorems, we show that the minimal symplectic $4$-manifold $M_{1,9}$ is an exotic copy of $17\mathbb{CP}^{2} \# 16\overline{\mathbb{CP}}^{2}$.

\begin{rem} In this remark, we discuss how to obtain a minimal symplectic $4$-manifold with the fundamental group $\mathbb{Z}_{2}$ and the invariants $(\sigma,\chi)=(0,8)$. Since $e = 4\chi - \sigma = 32$, such a symplectic $4$-manifold yields to a homology $15\mathbb{CP}^{2} \# 15\overline{\mathbb{CP}}^{2}$ with $\pi_{1} \cong \mathbb{Z}_{2}$. Since the covering group of the complex surface $M$ (see Proposition 1) is $\mathbb{Z}_2\times \mathbb{Z}_2$, it has a degree two unramified covering. Let us consider the normal unramified covering $M_2$ of $M$ with covering group given by index two subgroup $H'$ of $\pi_1(M)$.  Let $p: M_2\rightarrow M$ be the covering map. Notice that in this case the pull-back of $D$ under this $\mathbb{Z}_2$ covering is not isomorphic to the fundamental group of the ambient manifold, but rather a normal subgroup of index $2$. Using the symplectic pair $(M_{2}\#\CPb, \Sigma_5)$ instead of $(\tilde{M}\#\CPb, \Sigma_5)$, and $(X_{2,3}, \Sigma_5')$ in our above constructions (see \ref{2,10}) leads to a minimal symplectic $4$-manifold with $(\sigma,\chi)=(0,8)$ and $\pi_{1} \cong \mathbb{Z}_{2}$. Previously no such examples were known.
\end{rem}

\section*{Acknowledgments} 
The first author was partially supported by a Simons Research Fellowship and Collaboration Grant for Mathematicians from the Simons Foundation. He would like to thank the Departments of Mathematics at Purdue and at Harvard Universities for their hospitality, where part of this work was completed. The second author would like to thank Max Planck Institute for Mathematics in Bonn for its support and hospitality. The third author is partially supported by NSF grant DMS-$1501282$. All authors would like to thank Donald Cartwright for his help related to Magma computations.

\end{document}